\theoremstyle{plain}
\newtheorem{theorem}{Theorem}[section]
\newtheorem{lemma}[theorem]{Lemma}
\newtheorem{corollary}[theorem]{Corollary}
\theoremstyle{definition}
\newtheorem{definition}[theorem]{Definition}
\def\va{{\bf a}}   
  \def\vk{{\bf k}} 
  \def\vs{{\bf s}} 
\def\vu{{\bf u}}  \def\vw{{\bf w}} \def\vx{{\bf x}} 
\def\vy{{\bf y}} \def\vz{{\bf z}}
\def\vA{{\bf A}} \def\vB{{\bf B}} \def\vC{{\bf C}} \def\vD{{\bf D}}
 \def\vF{{\bf F}}  
\def\vI{{\bf I}}   
\def\vM{{\bf M}}   
  \def\vS{{\bf S}} \def\vT{{\bf T}}
\def\vU{{\bf U}}    
 \def\vZ{{\bf Z}}
\def\CC{{\mathbb C}}
\def\RR{{\mathbb R}}
\def\ZZ{{\mathbb Z}}
\def\Prob{{\mathbb P}}
\def\Exp{{\mathbb E}}
\def\supp{\operatorname{supp}}
\def\dopp{{f}}
\def\aR{{\va_R(\beta)}}
\def\aRp{{\va_R(\beta')}}
\def\aT{{\va_T(\beta)}}
\def\aTp{{\va_T(\beta')}}
\def\Std{{\vS_{\tau,\dopp}}}
\def\Stdp{{\vS_{\tau',\dopp'}}}
\def\op{{\text{op}}}
\def\vAt{\tilde{\vA}}
\def\vxt{{\vz}}
\begin{document}
\title[Random sensor arrays and Kerdock codes]{Accurate detection of 
moving targets via random sensor arrays and Kerdock codes}
\author[Thomas Strohmer]{Thomas~Strohmer}
\address[Thomas Strohmer]{Department of Mathematics\\
University of California\\ Davis, CA 95616}
\email
{strohmer@math.ucdavis.edu}

\author[Haichao Wang]{Haichao~Wang}
\address[Haichao Wang]{Department of Mathematics\\
University of California\\ Davis, CA 95616}
\email 
{hchwang@ucdavis.edu}

\keywords{Sparsity, Radar, Compressive Sensing, Random Sensor Arrays, MIMO, Kerdock Codes}
\maketitle

\begin{abstract}
The detection and parameter estimation of moving targets is one of the
most important tasks in radar. Arrays of randomly distributed antennas
have been popular for this purpose for about half a century. Yet,
surprisingly little rigorous mathematical theory exists for random arrays
that addresses fundamental question such as how many targets can be 
recovered, at what resolution, at which noise level, and with which algorithm. 
In a different line of research in radar, mathematicians and engineers
have invested significant effort into the design of radar transmission
waveforms which satisfy various desirable properties. In this paper
we bring these two seemingly unrelated areas together. Using tools from 
compressive sensing we derive a theoretical framework for the recovery of 
targets in the azimuth-range-Doppler domain via random antennas arrays. 
In one manifestation of our theory we use Kerdock codes as transmission 
waveforms and exploit some of their peculiar properties in our analysis.
Our paper provides two main contributions: (i) We derive the first 
rigorous mathematical theory for the detection of moving targets using random 
sensor arrays. (ii) The transmitted waveforms satisfy a variety of properties
that are very desirable and important from a practical viewpoint. Thus our
approach does not just lead to useful theoretical insights, but is also
of practical importance. Various extensions of our results are derived
and numerical simulations confirming our theory are presented.
\end{abstract}

\section{introduction}
\label{s:intro}


The detection and parameter estimation of moving targets is one of the
most important radar applications. The use of antenna arrays greatly 
improves our ability to perform this task. Antenna arrays make it possible
to estimate not only the range and Doppler frequency, but also 
the azimuth of the target. Furthermore, using multiple antennas
can significantly increase signal strength and thus in turn can greatly
enhance accuracy and our ability to locate low contrast targets
(``faint'' or ``weak'' targets).

Therefore it does not come as a surprise that in recent years radar systems
employing multiple antennas at the transmitter and the receiver (also referred 
to as MIMO radar, where MIMO stands for multiple-input multiple-output)
have attracted enormous attention in the engineering and signal processing
community. Despite the significant resources that have been devoted to
MIMO radar, there exists fairly little rigorous mathematical theory 
for MIMO radar that addresses fundamental questions, such as how many
targets can be detected at which azimuth-range-Doppler resolution and at
what signal-to-noise ratio. Existing theory focuses mainly on the detection of
a single target~\cite{mimobook,LiStoica2007}. Only very recently, in the 
footsteps of compressive sensing, do we see the emergence of a rigorous
mathematical theory for MIMO radar that addresses the more realistic and
more interesting case of multiple targets~\cite{SF12}.
However, for the widely popular case of randomly spaced antennas\footnote{In
this paper we only consider the case of co-located transmitters and receivers, 
which is the most relevant situation in practice. We do not discuss the
case of widely separated antennas~\cite{HaimovichBlumCimini2008}.},
the mathematical theory is still in its infancy. 

In an independent and seemingly disparate line of research in radar, mathematicians and engineers
have devoted substantial efforts to the design of radar transmission waveforms
that satisfy a variety of desirable properties. 
The vast majority of this research has focused on single antenna radar systems,
and it is a priori not clear if and how these waveforms can be utilized for
MIMO radar. In this paper we bring together these two independent areas of
research, MIMO radar with random antenna arrays and radar waveform design,
by developing a rigorous mathematical framework for accurate target detection via
random arrays, which at the same time utilizes some of the most attractive 
radar waveforms, such as Kerdock codes.

%

\medskip

A radar system illuminates a region of interest in order to detect the 
location, velocity, and reflectivity of the objects (targets) in its field 
of view. We consider the following standard (narrowband) radar 
model~\cite{Rihk}. Suppose a target located at \emph{range} $r$ is traveling 
with \emph{constant velocity} $v$ and has \emph{reflection coefficient}
$a$. Suppose further just for the moment that we have only one target,
one transmitter and one receiver (in which case we cannot detect direction).
After transmitting signal $s(t)$, the receiver observes the reflected
signal
\begin{equation} \label{eq:Radar_reflected_signal}
y(t) \;=\; a\:\! s(t-\tau_r)\:\! e^{2\pi i\omega_v t}
\end{equation}
where $\tau_r = 2r/c$ is the round trip time of flight,
$c$ is the speed of light, $\omega_v \approx -2\omega_0 v/c$ is the
Doppler shift, and $\omega_0$ is the carrier frequency. The basic 
idea is that the \emph{range-velocity} information $(r,v)$ of the
target can be inferred from the observed \emph{time delay-Doppler
shift} $(\tau_r,\omega_v)$ of $s$ in (\ref{eq:Radar_reflected_signal}).
For only one target this can be done conveniently by correlating the 
received signal $y$ with time-frequency shifted versions of the 
transmitted signal. Since we are dealing with bandlimited signals, it 
suffices to consider discrete signals sampled at a properly chosen
rate $\Delta_t$. It is therefore common practice to compute
\begin{equation}
\label{stft}
V(\tau,\omega):= \sum_l  y(l\Delta_t) \overline{\vs(l\Delta_t-\tau) e^{2\pi i\omega l}}
\end{equation}
and then locate the largest value of $|V(\tau,\omega)|$ in order to detect
the target in the range-Doppler domain.

In the presence of multiple targets more sophisticated methods are
necessary. In order to resolve azimuth in addition
to range and Doppler, we need to employ an array of antennas.
We assume an array of $N_T$ transmit and $N_R$ receiver antennas that
are co-located (also known as mono-static radar)
as illustrated in Figure~\ref{fig:mimoradar}.
A more detailed description of the setup is postponed to
Section~\ref{s:setup}.
\begin{figure}
\begin{center}
\includegraphics[width=80mm]{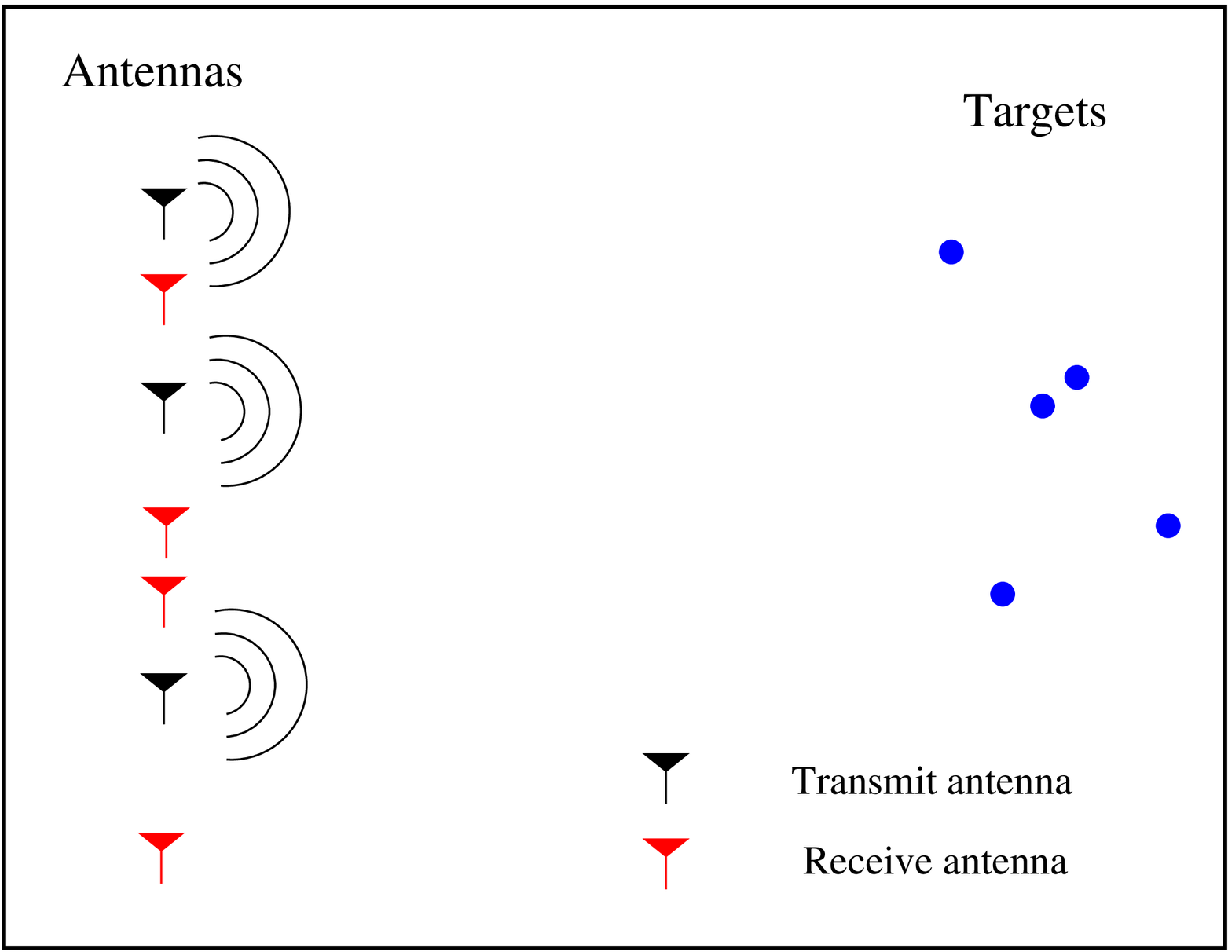}
\label{fig:mimoradar}
\caption{}
\end{center}
\end{figure}
The transmit antennas send simultaneously probing signals, which can
differ from antenna to antenna and can be chosen to our specifications.
It is convenient to divide the region of interest into 
range-azimuth-Doppler cells corresponding to distance, direction and 
velocity, respectively. Let $\vA$ be a measurement matrix whose columns 
correspond to the signal recorded at each receive antenna from a 
single unit-strength scatterer at
a specific range-azimuth-Doppler cell. Let $\vx$ denote a vector whose
elements represent the complex amplitudes of the scatterers. In many cases
the radar scene is sparse in the sense that only a small fraction (often
a {\sl very} small fraction) of the cells is occupied by the objects of
interest. In this case most of the entries of $\vx$ will be zero,
but we do not know which ones, otherwise we would have located
the targets already. With $\vw$ representing a noise vector, we
are faced with the linear system of equations 
\begin{equation}
\label{inverseproblem}
\vy = \vA \vx +\vw,
\end{equation}
where $\vy$ is a vector of measurements collected by the receive antennas 
over an observation interval. Typically this system will be 
{\em underdetermined}, which implies that it will have infinitely many 
solutions. What comes to our rescue here is the sparsity of $\vx$.
While conventional radar processing techniques do not take full advantage 
of sparsity of the radar scene, the recent development of 
compressive sensing provides us with the possibility to optimally utilize 
this property~\cite{HS09, PEPC10, SF12}. 
The approach pursued in this paper to obtain a sparse solution
of~\eqref{inverseproblem} is based on the lasso~\cite{T96}, which gained 
tremendous popularity in connection with compressive sensing. The lasso
solves
\begin{equation}
\label{Lasso2}
\underset{\vx}{\min}\quad \frac{1}{2}\|\vA\vx - \vy\|_2^2 + \lambda \|\vx\|_1,
\end{equation}
where the parameter $\lambda>0$ trades off goodness of fit with sparsity.

However, one of the main challenges in bringing compressive sensing theory
into radar is that in radar the sensing matrix $\vA$ cannot be freely chosen.
Its structure is dictated by the laws of physics on which radar is based. 
The crux is to carefully balance the desired resolution in the 
azimuth-range-Doppler domain with the degrees of freedom at our disposal in 
the formation of $\vA$, such as the antenna locations and the transmit 
waveforms.

A great deal of work has been devoted in the mathematical and engineering
literature to the design of radar transmission waveforms, see for 
instance~\cite{Alltop,BBW12,Cos84,GG05,GHS08,PCM08,Wel60} for a small sample of references.
The design criteria for radar waveforms can be roughly split into two categories:
(i) properties that are important from the viewpoint of hardware
implementation, and (ii) properties that are relevant
for target detection. Waveforms that fall in the first category are for
example polyphase sequences\footnote{A polyphase sequence is a sequence
whose coefficients are of the form $e^{2\pi i t_k/p}$ for some
$t_k \in \{0,\dots,p-1\}$, see e.g.~\cite{GG05}.},
since they give rise to signals with
low peak-to-average power ratio\footnote{The peak-to-aver power ratio of a
signal $\vs$ is defined, up to different normalizations, as 
$\frac{\|\vs\|_{\infty}}{\|\vs\|_2}$.} (PAPR). 
A low PAPR is desirable in the digital-to-analog conversion of signals, since
signals with large PAPR would require expensive power amplifiers.
Polyphase sequences also have the advantage that they can be very
convientiently implemented in hardware via simple look-up tables.
The second category usually includes waveforms with 
low auto-correlation and (nearly) ideal ambiguity function. Quite a number of polyphase
sequences, such as Alltop sequences or Kerdock codes, fall in this category.
With the exception of~\cite{HS09} a rigorous mathematical theory 
concerning the benefits (or even optimality) of such sequences has only existed 
for the detection of a single target. 
Common to all these carefully constructed sequences in both categories is that they have been
designed for single-antenna radar systems and it is a priori not clear at all if any
of these sequences are useful in exploiting the potential benefits of a MIMO radar system. 

Our paper provides two main contributions: (i) We derive the first rigorous
mathematical theory for the detection of moving targets in the
azimuth-range-Doppler domain for random sensor arrays.
(ii) The transmitted waveforms satisfy a variety of properties
that are very desirable and important from a practical viewpoint. 
In particular, we show that Kerdock sequences, which would perform very 
poorly in single-antenna radar, are nearly ideally suited for MIMO radar
with randomly spaced antennas. Since Kerdock codes are polyphase sequences, 
they have excellent PAPR and they are easy to implement in hardware via
a simple look-up table. Thus, our framework does not just lead to 
useful theoretical insights, but also has a very strong practical appeal. 

\subsection{Connections with prior work and innovations}
\label{ss:priorwork}

Random sensor arrays have been around for half a century. The pioneering
work~\cite{L64b,L64a} by Lo contains a mathematical analysis of
important specific characteristics of random arrays, such as sidelope
behavior and antenna gain. There is extensive engineering literature
that deals with random arrays in connection with phased array radar
technology, e.g.\ see~\cite{FTD00}. Recently, Carin made an explicit 
connection between the areas of random sensor arrays and compressive 
sensing~\cite{Car09}. He has shown that algorithms developed in
these two seemingly different areas are in fact highly inter-related. The setup in \cite{Car09} is quite different from ours, since the paper is only 
concerned with angular resolution (thus transmission waveforms do not even
explicitly enter into the model), while it is often crucial in practice 
to be able to estimate range and Doppler as well. Moreover, the theoretical
analysis in~\cite{Car09} follows more an engineering style and places less
emphasis on mathematical rigor. The paper~\cite{CBBGS09} provides 
interesting results for the angular estimation of stationary targets. 
Its setup is similar to that in~\cite{Car09}, and quite different from
ours, as it does not deal with waveform design nor with moving targets.

Kerdock codes have been proposed for radar in~\cite{HCM06}.
However in the setting of a single transmit antenna.
Kerdock codes are known to perform rather poorly\footnote{This poor
performance is caused by Property~(ii) in Theorem~\ref{th:Kerdock}.} even 
in the case of single targets as considered in~\cite{HCM06}.
Only in the setting of mulitple transmit antennas can Kerdock codes
exhibit their enormous potential. Our paper utilizes some properties
of Kerdock codes proved in~\cite{HCM06}, but otherwise there is no
overlap. In our paper we also 
present an extension of the main result, that allows for instance
the use of the so-called finite harmonic oscillator system as transmission
waveforms. These sequences have been derived in~\cite{GHS08}, where
the authors also briefly sketch their use in a single-antenna radar
system for the simple case of a single target. Thus, while our framework 
allows one to employ the finite harmonic oscillator system, there is essentially
no overlap of our results with those in~\cite{GHS08}.

The paper \cite{SF12} (coauthored by one of the authors) is closest to this
paper, but the setting is in a sense complementary. \cite{SF12} considers 
a MIMO radar setting with a very specific (non-random) choice for the 
antenna locations, but random waveforms, while the current paper deals
with randomly spaced antennas, but very specific, deterministic waveforms.
At first glance, the difference may appear to be mainly semantic. But in
practice, the second setting has many advantages. From an engineer's
viewpoint random waveforms have several drawback over properly designed
deterministic waveforms: they are much
harder to implement on a digital device (requiring more complicated
hardware, more memory, ...); and they exhibit a larger peak-to-average-power
ratio. On the other hand it makes no difference from
the viewpoint of physics or hardware, if we place the antennas at random 
or at deterministic locations.
In particular, the current paper yields some important insights, which
cannot be inferred from~\cite{SF12}: We obtain a theoretical framework for 
radar operating with random antenna arrays, a technique which have been 
around for half a century; we show that Kerdock sequences, which are not 
useful for SISO or SIMO radar\footnote{SISO stands for
single-input-single-output radar, and SIMO for single-input-multiple-output
radar (i.e., a radar with one transmit and multiple receive antennas).}, 
are excellent for MIMO radar; our approach
allows for waveforms that satisfy a number of properties which are very
desirable in practice, and are not satisfied by random waveforms.
Indeed, as mentioned above, we also show that the finite harmonic 
oscillator system ``plays well'' with random antenna arrays.

\bigskip

Our paper is organized as follows. Section~\ref{s:setup} describes the
problem setup and the radar model. We review key properties of
Kerdock codes in Section~\ref{s:kerdock}. Our main theorem
is presented in Section~\ref{s:mainresult} and Section~\ref{s:proof} is
devoted to the proof of the main theorem. In Section~\ref{s:fos} we
extend our framework to other deterministic waveforms, such as
the finite harmonic oscillator system. Numerical simulations
are presented in Section~\ref{s:simulations}.

\subsection{Notation}

For a matrix $\vA$, we use $\vA^\ast$ to denote its adjoint matrix, which
is its conjugate transpose. The operator norm of $\vA$ is the largest
singular value of $\vA$ and is denoted by $\|\vA\|_{\op}$.
We denote the $k$-th column of $\vA$ by $\vA_k$ and
the element in the $i$-th row and $k$-th column by $\vA_{[i,k]}$.
The coherence of $\vA$ is defined as
\begin{equation}
\mu(\vA) := \underset{k \neq l}{\max} \,\,
\frac{|\langle \vA_k, \vA_l \rangle|}{\|\vA_k\|_2 \|\vA_l\|_2}.
\label{eq:coherence}
\end{equation}

The $n \times n$ Discrete Fourier Transform (DFT) matrix is written as
$\vF_n$ and the $n\times n$ identity matrix as $\vI_n$. For $\vx \in \CC^n$, let
$\vT_\tau$ denote the circulant translation operator, defined by
\begin{equation}
\label{translation}
\vT_\tau \vx(l) = \vx(l-\tau), \qquad \tau \in \CC^n,
\end{equation}
where $l-\tau$ is understood modulo $n$,
and let $\vM_f$ be the modulation operator defined by
\begin{equation}
\label{modulation}
\vM_f \vx(l) = \vx(l) e^{2\pi i fl/n}.
\end{equation}

\subsection*{Acknowledgements}

The authors acknowledge generous support by the National Science Foundation 
under grant DTRA-DMS 1042939 and by DARPA under grant N66001-11-1-4090.

\section{Problem Setup}
\label{s:setup}

\if 0
We consider the following simple (narrowband) radar model.
Suppose a target located at \emph{range} $r$ is traveling with
\emph{constant velocity} $v$ and has \emph{reflection coefficient}
$a$. Suppose further for now that we have one transmitter and one 
receiver which are co-located, also known as mono-static radar. 
After transmitting signal $s(t)$, the receiver observes the reflected signal
\begin{equation} \label{eq:Radar_reflected_signal}
y(t) \;=\; a\:\! s(t-\tau_r)\:\! e^{2\pi i\omega_v t}
\end{equation}
where $\tau_r = 2r/c$ is the round trip time of flight,
$c$ is the speed of light, $\omega_v \approx -2\omega_0 v/c$ is the
Doppler shift, and $\omega_0$ is the carrier frequency. The basic
idea is that the \emph{range-velocity} information $(r,v)$ of the
target can be inferred from the observed \emph{time delay-Doppler
shift} $(\tau_r,\omega_v)$ of $s$ in (\ref{eq:Radar_reflected_signal}).
For only one target this can be done conveniently via the matched filter
by correlating the received signal $y$ with time-frequency shifted versions
of the transmitted signal, i.e., by computing
$\langle y, \vT_\tau \vM_\omega s \rangle$, where in principle
$(\tau,\omega) \in \RR^2$ (although in practice it suffices to consider
a much smaller range for $(\tau,\omega)$).
\fi

We consider a MIMO radar employing $N_T$ antennas at the transmitter
and $N_R$ antennas at the receiver. We assume for convenience that
transmitters and receivers are co-located, cf.~Figure~\ref{fig:mimoradar}.
Furthermore, we assume a coherent propagation scenario, i.e.,
the element spacing is sufficiently small so that the radar return from
a given scatterer is fully correlated across the array.  The arrays and
all the scatterers are assumed to be in the same 2-D plane. The extension
to the 3-D case is straightforward.

The array manifolds $\aT$, $\aR$ with randomly spaced antennas are given by 
\begin{equation}
\label{transmit_array} 
\va_T(\beta) = 
\left[e^{2 \pi ip_1 \beta}, e^{2 \pi ip_2 \beta},\dots,
                       e^{2 \pi ip_{N_T} \beta}\right]^T,
\end{equation} 
and 
\begin{equation}
\label{receive_array} 
\va_R(\beta) = 
\left[ e^{2 \pi iq_1 \beta}, e^{2 \pi iq_2 \beta},\dots,
                       e^{2 \pi iq_{N_R} \beta}\right]^T,
\end{equation} 
where we assume that the relative antenna spacings $p_j$'s and $q_j$'s are 
i.i.d.\ uniformly on $[0,\frac{N_R N_T}{2}]$.
The $j$-th transmit antenna repeatedly transmits the signal $s_j(t)$,
which is assumed to be a periodic, continuous-time 
signal of period-duration $T$ seconds and bandwidth $B$. 
We observe the back-scattered signal over a duration $T$, and since
its bandwidth is $B$, it suffices that each receive antennas takes $N_s$ 
samples\footnote{Actually the received signal will have a somwhat larger
bandwidth $B_1 > B$ due to the Doppler effect. However, in practice this
increase in bandwidth is small, so we can assume $B \approx B_1$.}, 
where $N_s = T/\Delta_s$ and $\Delta_s = \frac{1}{2B}$.
It is convenient to introduce the finite-length vector
$\vs_j$
associated with $s_j$, via $\vs_j(l) := s_j(l \Delta_s), l = 1,\dots,N_s$.

Let $\vZ(t; \beta, \tau,f)$ be the $N_R \times N_s$ noise-free received
signal matrix from a unit strength target at direction $\beta$, delay $\tau$,
and Doppler $f$ (corresponding to its radial velocity with respect 
to the radar). Then
\begin{equation}
\vZ(t; \beta, \tau,f) = \va_R(\beta) \va_T^T(\beta) \vS^{T}_{\tau,f},
\notag
\end{equation}
where $\Std$ is a $N_s \times N_T$ matrix whose columns are the
circularly
delayed and Doppler shifted signals $s_j(t-\tau)e^{2 \pi if t}$.

We let $\vz(t; \beta, \tau, f) = {\rm vec}\{\vZ\}(t; \beta, \tau, f)$
be the noise-free vectorized received signal. 
We set up a discrete azimuth-range-Doppler grid $\{\beta_l, \tau_j, f_k\}$ for
$1\le l\le N_\beta$, $1\le j \le N_\tau$ and $1\le k\le N_f$,
where $\Delta_\beta, \Delta_\tau$ and $\Delta_f$ denote the corresponding 
discretization stepsizes. Using vectors $\vz(t; \beta_l,\tau_j,f_k)$ for 
all grid points $(\beta_l,\tau_j,f_k)$ we construct a complete response 
matrix $\vA$ whose columns
are $\vz(t; \beta_l,\tau_j,f_k)$ for $1 \le l \le  N_{\beta}$ and
$1 \le j \le  N_\tau$, $1 \le k \le N_f$. In
other words, $\vA$ is a $N_R N_s \times N_\tau N_{\beta} N_f$ matrix
with columns
\begin{equation}\label{dopplermatrix}
\vA_{\beta, \tau, f}=\va_R(\beta) \otimes \vS_{\tau, f}\va_T(\beta).
\end{equation}

Assume that the radar illuminates a scene consisting of $S$ scatterers
located on $S$ points of the $(\beta_l,\tau_j,f_k)$-grid. Let $\vx$ be a
sparse vector whose non-zero elements are the complex amplitudes of the
scatterers in the scene. The zero elements correspond to grid points
which are not occupied by scatterers. We can then define the radar
signal $\vy$ received from this scene by
\begin{equation}
\label{radarsystem}
\vy = \vA \vx + \vw
\end{equation}
where $\vy$ is an $N_R N_s\times 1$  vector,  $\vx$ is an  $N_\tau N_{\beta}
N_f \times 1 $ sparse vector and $\vw$ is an $N_R N_s\times 1$ complex Gaussian
noise vector.
Our goal is to solve for $\vx$, i.e., to locate
the scatterers (and their reflection coefficients) in the
azimuth-delay-Doppler domain.


\medskip
\noindent
{\bf Remark:}
The assumption that the targets lie on the grid points, while common
in compressive sensing, is certainly restrictive. A violation of
this assumption will result in a model mismatch, sometimes dubbed
{\em gridding error}, which can potentially be quite severe~\cite{HS10,CSP11}.
Recently some interesting strategies have been proposed to overcome 
this gridding error~\cite{FW12,TBS12}.
But these methods -- at least in their current form -- are not directly
applicable to our setting. This model mismatch issue is beyond the scope
of this paper and will be addressed in our future research.

\section{Kerdock codes}
\label{s:kerdock}

In this section we introduce one particularly useful set of
transmission waveforms.
Due to the setup in Section~\ref{s:setup} it suffices that we deal
with discrete, finite-length signals as transmission waveforms.
We briefly review the construction of Kerdock codes and some of their
fundamental properties.
There is a long list of properties that radar waveforms should satisfy.
As we will see in this paper, Kerdock codes fulfill many of them.
Kerdock codes over $\ZZ_2$ (i.e., binary Kerdock codes) were originally 
introduced in \cite{K72}. In the seminal paper~\cite{CCK97} the authors 
extend Kerdock codes from $\ZZ_2$ to $\ZZ_4$. By doing so, they uncover
many fascinating
properties of Kerdock codes and reveal numerous deep connections between
coding theory, discrete geometry and group theory. In the same paper,
the authors also extend Kerdock codes to the setting of $\ZZ_p$, 
where $p$ is an odd prime. 

Kerdock codes are an example of so-called mutually unbiased 
bases~\cite{WF89,SH03}.
Kerdock codes have also been proposed for use in communications
engineering~\cite{HSP06,IH08}. In~\cite{HCM06} the authors suggest the use
of Kerdock codes for radar, based on the peculiar
properties of the discrete ambiguity function associated with Kerdock
codes. We emphasize however that for the single transmit antenna radar scenario
Kerdock codes would actually perform rather badly, as discussed after Theorem
\ref{th:Kerdock} and shown by Figure \ref{fos1} in
 Section~\ref{s:simulations}. It is only in the setting of multiple
transmit antennas that Kerdock codes become useful for radar.

For the remainder of this paper we will only be concerned with Kerdock codes
over $\ZZ_p$. Some of the Kerdock codes over $\ZZ_p$, namely those 
corresponding to desarguesian planes in the language of~\cite{CCK97}, have 
also been derived earlier in~\cite{Lev82} and~\cite{Koe95}.
A simple way to construct these Kerdock codes is the following,
in which they arise as eigenvectors of time-frequency shift operators. 
Let $p$ be an odd prime number. For each $k=0,\dots,p-1$ we compute the 
eigenvector decomposition of $\vT_0 \vM_k$ (which always exists, since 
$\vT_0 \vM_k$ is a unitary matrix) 
\begin{equation}
\vU_{(k)} \boldsymbol{\Sigma}_{(k)} \vU_{(k)}^{\ast} = \vT_0 \vM_k,
\label{eigenvector1}
\end{equation}
where the unitary matrix $\vU_{(k)}$ contains the eigenvectors of 
$\vT_0 \vM_k$ and the diagonal matrix $\boldsymbol{\Sigma}_{(k)}$ the 
associated eigenvalues\footnote{The attentive reader will have noticed 
that $\vU_{(0)}$ is just the $p \times p$ DFT matrix $\vF_p$.}. 
Furthermore, we define $\vU_{(p)}: = I_p$. Now, let $\vu_{k,j}$ be the $j$-th 
column of $\vU_{(k)}$. The set consisting of the $p^2+p$ vectors 
$\{\vu_{k,j}, k=0,\dots,p; j=0,\dots,p-1\}$ 
forms a $\ZZ_p$-Kerdock code. 
There are numerous equivalent ways to derive
this Kerdock code, but, as pointed out earlier, not {\em all} Kerdock codes 
over $\ZZ_p$ are equivalent (see also the comment following Corollary 11.6
in~\cite{CCK97}). But we will be a bit sloppy, and simply refer to the
Kerdock code constructed above as {\em the} Kerdock code.

In the following theorem we collect those key properties of Kerdock codes 
that are most relevant for radar. These properties are either 
explicitly proved in~\cite{CCK97,HCM06} or can be derived easily 
from properties stated in those papers.

\begin{theorem} \label{th:Kerdock} 
Kerdock codes over $\ZZ_p$, where $p$ is an odd prime, satisfy the
following properties:
\begin{itemize}
\item[(i)] Mutually unbiased bases: For all $k=0,\dots,p$ and all
$j=0,\dots,p-1$, there holds:
$$
|\langle \vu_{k,j},\vu_{k',j'} \rangle| = 
\begin{cases}
1 & \text{if $k=k', j = j'$,} \\
0 & \text{if $k=k', j\neq j'$,} \\
\frac{1}{\sqrt{p}} & \text{if $k\neq k'$.}
\end{cases}
$$
\item[(ii)] Time-frequency ``autocorrelation'':\\
(a) For any fixed $(f,l)\neq(0,0)$ there exists a unique $k_0$ such that  
\begin{align}
 |\langle \vM_f \vT_l \vu_{k_0,j}, \vu_{k_0,j} \rangle| = 1 &
             \qquad  \text{for $j=0,\dots,p-1$,}\label{autocorr1} \\
 |\langle \vM_f \vT_l  \vu_{k,j}, \vu_{k,j}\rangle| = 0 & \qquad  \text{for $k\neq k_0$.}
\label{autocorr2}
\end{align}
(b) For any fixed $0\le k\le p-1$, there exist   $(f_r,l_r)$, $r=1,\dots, p$  such that  
\begin{align}
 |\langle \vM_{f_r} \vT_{l_r} \vu_{k,j}, \vu_{k,j} \rangle| = 1 &
             \qquad  \text{for $j=0,\dots,p-1$,}\label{autocorr3} 
\end{align}
\item[(iii)] Time-frequency crosscorrelation:
For all $k\neq k'$ and all $f$ and $l$ there holds:
\begin{equation}
\label{crosscorr}
|\langle \vM_f\vT_l \vu_{k,j} , \vu_{k',j} \rangle| \le \frac{1}{\sqrt{p}}  
  \qquad  \text{for $j=0,\dots,p-1$}.
\end{equation}
\item[(iv)] Polyphase property (Roots of unity property) in time and 
in frequency:\\
For any $k=0,\dots,p-1; j=0,\dots,p-1$, there holds:
\begin{equation}
\vu_{k,j}(l) = e^{2\pi i r/p} \quad \text{for some $r \in \{0,\dots,p-1\}$.}
\label{roots_time}
\end{equation}
For any $k=1,\dots,p; j=0,\dots,p-1$, there holds:
\begin{equation}
\hat{\vu}_{k,j}(l) = e^{2\pi i r/p} \quad \text{for some $r \in \{0,\dots,p-1\}$.}
\label{roots_frequency}
\end{equation}
\end{itemize}
\end{theorem}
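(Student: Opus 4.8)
The plan is to deduce all four properties from the structure of the finite Heisenberg--Weyl group $H_p$ generated inside the unitary group of $\CC^p$ by the cyclic shift $\vT := \vT_1$ and the modulation $\vM := \vM_1$, which obey $\vM\vT = \omega\,\vT\vM$ with $\omega = e^{2\pi i/p}$. This group has order $p^3$, its center $Z$ is generated by $\omega\vI_p$, and $H_p/Z \cong \ZZ_p^2$ via $\omega^c\vM_f\vT_l \mapsto (l,f)$, with the commutator pairing descending to the standard symplectic form on $\ZZ_p^2$. Its $p+1$ maximal abelian subgroups $A_0,\dots,A_p$ correspond to the $p+1$ lines through the origin of $\ZZ_p^2$; each $A_k$ is a commuting family of $p^2$ unitaries and so has an orthonormal eigenbasis of exactly $p$ vectors. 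The first step is to match this list of eigenbases with the bases $\vU_{(0)},\dots,\vU_{(p)}$ of~\eqref{eigenvector1} -- the vertical line giving $\vU_{(p)} = \vI_p$, the horizontal line giving $\vU_{(0)} = \vF_p$, and the line of slope $k$ giving $\vU_{(k)}$ for $1\le k\le p-1$ -- while recording the explicit chirp formula $\vu_{k,j}(l) = \tfrac{1}{\sqrt p}\,\omega^{2^{-1}kl^2 + jl}$, valid for $1\le k\le p-1$, where $2^{-1}$ is the inverse of $2$ in $\ZZ_p$ (which exists since $p$ is odd). Two elementary facts about $H_p$ then carry the argument: (a) any two elements commute modulo $Z$, so conjugation by $\vM_f\vT_l$ preserves each $A_k$; and (b) the $p^2-1$ nonzero points of $\ZZ_p^2$ are partitioned by the $p+1$ lines, so a given $\vM_f\vT_l$ with $(l,f)\neq(0,0)$ lies in $A_kZ$ for exactly one index $k$.

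For property~(i), the cases $k=k'$ are just orthonormality of an eigenbasis of commuting unitaries; for $k\neq k'$ I would obtain $|\langle\vu_{k,j},\vu_{k',j'}\rangle| = 1/\sqrt p$ either by quoting the mutually-unbiased-bases relation from~\cite{CCK97}, or by expanding the two rank-one spectral projections in the operator orthonormal basis $\{\vM_f\vT_l\}$ and observing that, by fact~(b), their expansions share only the identity term, whose contribution forces $|\langle\vu_{k,j},\vu_{k',j'}\rangle|^2 = 1/p$ (for $1\le k,k'\le p-1$ this is a single quadratic Gauss-sum evaluation via the chirp formula). Properties~(ii) and~(iii) both rest on one vanishing lemma: if $g = \vM_f\vT_l \notin A_kZ$, then $\langle g\,\vu_{k,j},\vu_{k,j}\rangle = 0$ for every $j$. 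To see this, note that $\vu_{k,j}$ is an eigenvector of each $a\in A_k$, so $\langle(a^{-1}ga)\,\vu_{k,j},\vu_{k,j}\rangle = \langle g\,\vu_{k,j},\vu_{k,j}\rangle$; since $a^{-1}ga = g\,[g,a]$ and $[g,a] = \omega^{e}\vI_p$ is central, this reads $\langle g\,\vu_{k,j},\vu_{k,j}\rangle = \omega^{e}\langle g\,\vu_{k,j},\vu_{k,j}\rangle$, and choosing $a$ with $e\neq 0$ -- possible precisely because $g\notin A_kZ$ and a line equals its own symplectic complement -- forces the inner product to vanish. Conversely, when $\vM_f\vT_l\in A_kZ$ the vector $\vu_{k,j}$ is an eigenvector of $\vM_f\vT_l$, so the inner product has modulus $1$.

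With the lemma in hand the remaining claims are short. For~(ii)(a), fact~(b) names the unique $k_0$ with $\vM_f\vT_l\in A_{k_0}Z$, which gives~\eqref{autocorr1}, and the lemma gives~\eqref{autocorr2} for $k\neq k_0$. For~(ii)(b), I would take for $(l_r,f_r)$ the $p$ points of the line $A_k/Z$, each lying in $A_kZ$, which gives~\eqref{autocorr3}. For~(iii), fact~(a) shows $\vM_f\vT_l$ permutes the one-dimensional eigenspaces of $A_k$, so $\vM_f\vT_l\,\vu_{k,j} = \gamma\,\vu_{k,\sigma(j)}$ for a unit scalar $\gamma$ and a permutation $\sigma$; hence $|\langle\vM_f\vT_l\,\vu_{k,j},\vu_{k',j}\rangle| = |\langle\vu_{k,\sigma(j)},\vu_{k',j}\rangle| = 1/\sqrt p$ by~(i), which yields~\eqref{crosscorr} (in fact equality holds, but the stated bound is all one needs). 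For property~(iv), the chirp formula shows that for $1\le k\le p-1$ every entry of $\vu_{k,j}$ is a $p$-th root of unity after the $1/\sqrt p$ normalization, giving~\eqref{roots_time}; this genuinely fails at $k=p$, where $\vu_{p,j}$ is a standard basis vector, which is why that case is excluded. On the frequency side I would use that the DFT of a quadratic exponential is again a quadratic exponential (complete the square; the remaining factor is a Gauss-sum constant of modulus $\sqrt p$), which gives~\eqref{roots_frequency} for the complementary range $k=1,\dots,p$, the excluded case $k=0$ being exactly the one whose DFT is a standard basis vector.

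The hard part is not conceptual depth but the bookkeeping in the first step: pinning down which line of $\ZZ_p^2$ carries the eigenbasis $\vU_{(k)}$ of~\eqref{eigenvector1} for each $k$ -- equivalently, which single time-frequency direction fixes $\vU_{(k)}$ up to a phase -- so that the index $k_0$ in~\eqref{autocorr1} and the shifts $(f_r,l_r)$ in~\eqref{autocorr3} are attached to the correct data, together with keeping the $1/\sqrt p$ normalizations in~(i) and~(iv) mutually consistent. All of the underlying ingredients -- Stone--von Neumann, the commutator/character-sum vanishing, the MUB relations, and the Gauss-sum evaluations -- are classical and already present in~\cite{CCK97,HCM06}; the remaining effort is simply to assemble them into precisely the stated form.
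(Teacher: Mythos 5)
Your proposal is correct, but it takes a genuinely different route from the paper: the paper's proof of Theorem~\ref{th:Kerdock} is essentially a citation -- property~(i) is quoted from Lemma~11.3 of \cite{CCK97}, (ii) and (iii) from Theorem~3 of \cite{HCM06}, and \eqref{roots_time} from a remark in \cite{CCK97} -- with the only actual argument being the derivation of \eqref{roots_frequency} from \eqref{eigenvector1} via the conjugation relations $\vF_p\vT_x\vF_p^{\ast}=\vM_{-x}$, $\vF_p\vM_x\vF_p^{\ast}=\vT_x$ (which send the eigenbasis of one time--frequency line to that of another, so the frequency claim reduces to the time claim). You instead re-derive the cited facts from scratch through the Heisenberg--Weyl stabilizer picture: lines in $\ZZ_p^2$, the conjugation/commutator vanishing lemma for $\langle \vM_f\vT_l\vu_{k,j},\vu_{k,j}\rangle$ when $(l,f)$ is off the line, the permutation of eigenspaces for (iii), and the chirp/Gauss-sum formula for (i) and (iv). Your argument is sound (the vanishing lemma is exactly the standard character-sum/commutator argument, and the permutation argument even gives equality in \eqref{crosscorr}), and it buys self-containedness plus an explicit identification of $k_0$ in \eqref{autocorr1} and of the invariance shifts in \eqref{autocorr3} as the points of the line $L_k$; the paper's version buys brevity at the cost of deferring to \cite{CCK97,HCM06}. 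Two small points you implicitly handle but should state: the paper's operator $\vT_0\vM_k$ in \eqref{eigenvector1} must be read as the shift-by-one composed with $\vM_k$ (otherwise $\vU_{(0)}$ would not be $\vF_p$ as the footnote asserts), which is exactly the reading your line-matching uses; and both \eqref{roots_time}--\eqref{roots_frequency} and your Gauss-sum computation hold only up to the global unit phase left free by the eigendecomposition (and up to the $1/\sqrt{p}$ normalization), an imprecision already present in the paper's statement rather than a defect of your proof. Also, your chirp formula in fact covers $k=0$ as well, which closes the $k=0$ case of \eqref{roots_time} that your write-up passes over silently.
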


\begin{proof}
Property~(i) is proved for instance in Lemma~11.3 in~\cite{CCK97}.
Properties~(ii) and~(iii) appear in Theorem~3 of~\cite{HCM06}.
Statement~\eqref{roots_time} of property~(iv) follows from the comment
right after Corollary~11.6 in~\cite{CCK97}.
Finally, statement~\eqref{roots_frequency} of property~(iv) follows 
from~\eqref{eigenvector1} together with property~(3) and the well-known 
fundamental relationships
$$\vF_p \vT_x \vF_p^{\ast} = \vM_{-x}, \qquad \vF_p \vM_x \vF_p^{\ast} = \vT_x.$$
\end{proof}

Kerdock codes have been proposed for adaptive radar in~\cite{HCM06}.
We emphasize again though that Kerdock codes would not be very effective
for a radar system with a single transmit antenna (SISO or SIMO radar). 
This can be easily seen as follows: Assume we only have one antenna that 
transmits one waveform $\vs$. Because of \eqref{autocorr3}, $\vs$ is (up to 
a constant phase factor) equal to $\vM_f\vT_l\vs$ for some $(f,l)$. In 
practice this ambiguity prevents us from determining the distance and the 
velocity of the object, when using Kerdock codes for SISO or SIMO.


As a consequence of the aforementioned ambiguity we will not use {\em
all} of the Kerdock codes as transmission signals for our MIMO radar,
instead we will choose one code for each index $k$. The reason is that
we need the waveforms to have low time-frequency crosscorrelation, 
 while \eqref{crosscorr} only holds when $k$ and $k'$ are different.

\begin{definition}[\bf{Kerdock waveforms}] Let $\{\vu_{k,j}, k=0,\dots,p,
j=0,\dots,p-1\}$ be a Kerdock code over $\ZZ_p$. The {\em Kerdock waveforms}
$\vk_0,\dots,\vk_r$, where $r < p$, are given by $\vk_k = \vu_{k,j}$ for
some arbitrary $j$. In other words, for each $k=0,\dots,r-1$ we pick an 
arbitrary vector from the orthonormal basis $\{\vu_{k,j}\}_{j=0}^{p-1}$.
\end{definition}
Note that Kerdock waveforms do not include any unit vectors, since only 
the first $r$ unitary matrices $\vU_{(0)},\dots, \vU_{(r-1)}$ are considered 
and $r$ is strictly less than $p$ (recall that $\vU_{(p)}=\vI_p$).

\section{The main result}
\label{s:mainresult}

As mentioned in the introduction, 
a standard approach to solve~\eqref{radarsystem} when $x$ is sparse, is
\begin{equation}
\label{Lasso}
\underset{\vx}{\min}\, \frac{1}{2}\|\vA\vx - \vy\|_2^2 + \lambda \|\vx\|_1,
\end{equation}
which is also known as lasso~\cite{T96}. 
But instead of~\eqref{Lasso}, we will use the {\em debiased lasso}.
That means first we compute an approximation $\tilde{I}$ for the support of
$\vx$ by solving~\eqref{Lasso}. This is the detection step. Then, in the
estimation step, we ``debias'' the solution by computing 
the amplitudes of $\vx$ via solving the reduced-size least squares problem
$\min \|\vA_{\tilde{I}} \vx_{\tilde{I}} - \vy\|_2$, where $\vA_{\tilde{I}}$
is the submatrix of $\vA$ consisting of the columns corresponding
to the index set $\tilde{I}$, and similarly for $\vx_{\tilde{I}}$.

We assume that the locations of the targets are random. To be precise,
we assume that the $S$ nonzero coefficients of $x$ are selected uniformly
at random and the phases of the non-zero entries of $x$ are random and
uniformly distributed in $[0, 2\pi)$. We will refer to this model
as the generic $S$-sparse model.

We are now ready to state our main result.

\begin{theorem} \label{th:maindoppler} 
Consider $\vy =
\vA \vx +\vw$, where $\vA$ is defined as in \eqref{dopplermatrix} and
$\vw_j \in {\mathcal CN}(0,\sigma^2)$.  
Assume that the positions of
the transmit and receive antennas  $p_j$'s and $q_j$'s are chosen
i.i.d. uniformly on $[0,\frac{N_RN_T}{2}]$ at random.
Suppose further that each transmit antenna sends a
different Kerdock waveform, i.e. the columns of the signal matrix $\vS$
are different Kerdock waveforms. 
Choose the discretization
stepsizes to be $\Delta_\beta = \frac{2}{N_R N_T}$,$\Delta_\tau = \frac{1}{2B}$, $\Delta_f = \frac{1}{T}$ and suppose that
\begin{equation}\label{assumptiondoppler} \max\big(N_RN_T, 32N_T^3\log
N_\tau N_fN_\beta\big)\le N_s=N_\tau , \end{equation} and also
\begin{equation}\label{assumptiondoppler2} \log^2N_\tau N_fN_\beta\le N_T\le N_R.  \end{equation}

If $\vx$ is drawn from the generic $S$-sparse scatterer model with
\begin{equation} S \le \frac{c_0 N_\tau}{\log N_\tau N_fN_\beta}
\label{lassosparsity1doppler} \end{equation} for some constant $c_0>0$,
and if \begin{equation} \underset{k \in I}{\min}\, |\vx_k| >
                   \frac{8\sqrt3\sigma}{\sqrt{N_R N_T}} \sqrt{2 \log
                   N_\tau N_fN_\beta},
\label{amplitudeproperty2} \end{equation} then the solution $\tilde{\vx}$
of the debiased lasso computed with $\lambda = 2 \sigma \sqrt{2
\log N_\tau N_fN_\beta}$ satisfies 
\begin{equation} \label{support2}
\supp (\tilde{\vx}) = \supp (\vx), 
\end{equation} 
with probability at
least 
\begin{equation} \label{probbound3} 1-p_1, 
\end{equation} 
and
\begin{equation} \frac{\|\tilde{\vx} - \vx \|_2}{\|\vx\|_2}
    \le \frac{5\sigma \sqrt{3N_RN_s}}{\|\vy\|_2}
\label{error2} 
\end{equation} 
with probability at least
\begin{equation} \label{probbound3a} (1-p_1)(1-p_2),
\end{equation} 
where 
\begin{align*} p_1 =16N_\tau^{-2}N_R^{-1}+ 8N_\tau^{-2}N_f^{-2}+
  4N_TN_\tau^{-2}N_f^{-2}+4(N_\tau N_f)^{-1}\\
 +4N_\tau^{-3}N_f^{-3} N_R^{-2} N_T^{-1}+8 N_T^{-2}( N_\tau N_fN_R)^{-3},
\end{align*} 
and 
$$ p_2 = 2(N_\tau N_fN_\beta)^{-1}(2\pi \log (N_\tau N_fN_\beta) + 
K(N_\tau N_fN_\beta)^{-1}) + {\mathcal O}((N_\tau N_fN_\beta)^{-2 \log 2}).$$ 
\end{theorem}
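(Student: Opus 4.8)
The plan is to reduce the theorem to the near-ideal model selection analysis of the lasso of Cand\`es and Plan, which handles exactly the random-support, random-sign (here random-phase) model assumed here. Thus the whole argument turns on verifying a short list of properties of the column-normalised sensing matrix $\vA$ of \eqref{dopplermatrix}: (a) the coherence bound $\mu(\vA)\lesssim 1/\log(N_\tau N_fN_\beta)$; (b) near-equal column norms; (c) a spectral bound $\|\vA\|_{\op}^2\lesssim N_\beta N_f$, which is exactly what turns the generic Cand\`es--Plan sparsity threshold $S\lesssim n/(\|\vA\|_{\op}^2\log n)$, with $n=N_\tau N_\beta N_f$ the number of columns, into \eqref{lassosparsity1doppler}; and (d) $\|\vA_I^{\ast}\vA_I-\vI\|_{\op}\le\tfrac12$ for the uniformly random support $I$. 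Granting (a)--(c) on an event of probability $1-p_1$ over the random antenna locations, the Cand\`es--Plan theorem, run with $\lambda=2\sigma\sqrt{2\log(N_\tau N_fN_\beta)}$, shows on a further event of probability $1-p_2$ over the signal model that the lasso \eqref{Lasso} returns a solution supported inside $\supp(\vx)$ whose entries are $\ell_\infty$-close to those of $\vx$; the amplitude hypothesis \eqref{amplitudeproperty2}, which is precisely the Cand\`es--Plan minimum-signal threshold seen through the column normalisation, then promotes this to equality, giving \eqref{support2} and \eqref{probbound3}. For the debiasing step, once the support is correct one has $\tilde\vx_I-\vx_I=\vA_I^{\dagger}\vw$, and \eqref{error2} follows by combining (d) (whence $\|\vA_I^\dagger\|_{\op}\lesssim(\text{common column norm})^{-1}$), a $\chi^2$ tail bound $\|\vw\|_2\lesssim\sigma\sqrt{N_RN_s}$, and the elementary comparison of $\|\vy\|_2$ with $\|\vx\|_2$ through that column norm, accounting for \eqref{probbound3a}.

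\textbf{The main obstacle: the coherence estimate.} By the Kronecker structure of \eqref{dopplermatrix}, the inner product of two distinct columns factors as $\langle\va_R(\beta),\va_R(\beta')\rangle\cdot\langle\vS_{\tau,f}\va_T(\beta),\vS_{\tau',f'}\va_T(\beta')\rangle$, and the second factor expands, up to unimodular phases, as $\sum_{j,j'}e^{2\pi i(p_j\beta-p_{j'}\beta')}\langle\vM_{f-f'}\vT_{\tau-\tau'}\vk_j,\vk_{j'}\rangle$, with $\vk_j$ the Kerdock waveform on transmit antenna $j$. Here the properties of Theorem~\ref{th:Kerdock} do the work: property~(iii) bounds every term with $j\neq j'$ (hence with distinct Kerdock indices) by $1/\sqrt p$, $p=N_s$, while property~(ii) forces every $j=j'$ term to vanish \emph{except for at most one index}, whose time-frequency autocorrelation may be as large as $1$ --- the very ``spike'' that ruins Kerdock codes for SISO radar. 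I would then split into $\beta=\beta'$ and $\beta\neq\beta'$. The transmit factor is bounded \emph{deterministically}: the lone spike contributes a single term of modulus $\le1$, and the off-diagonal part is at most $\|C\|_{\op}\|\va_T(\beta)\|\,\|\va_T(\beta')\|\le N_T^2/\sqrt p$, so the factor is $\lesssim 1+N_T^2/\sqrt p$. The receive factor equals $N_R$ if $\beta=\beta'$; if $\beta\neq\beta'$ it is a sum of i.i.d.\ unimodular random variables, because the $q_j$ are uniform on an interval of length $N_RN_T/2$ and $\beta-\beta'$ is a nonzero multiple of $\Delta_\beta=2/(N_RN_T)$, so $2\pi q_j(\beta-\beta')$ sweeps an integer number of full turns; Hoeffding's inequality then gives $|\langle\va_R(\beta),\va_R(\beta')\rangle|\lesssim\sqrt{N_R\log(N_\tau N_fN_\beta)}$. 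Dividing by the common column norm $\asymp\sqrt{N_RN_T}$ (near-constant by property~(i)), the coherence is of order $1/N_T+N_T/\sqrt p$ in the worst case $\beta=\beta'$ and smaller otherwise, and the hypotheses $N_s=N_\tau\ge 32N_T^3\log(N_\tau N_fN_\beta)$ and $N_T\ge\log^2(N_\tau N_fN_\beta)$ are exactly what make both terms $\lesssim 1/\log(N_\tau N_fN_\beta)$. A union bound over all pairs of grid points, with each Hoeffding estimate at a suitable polynomial tail level, produces the first few terms of $p_1$.

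\textbf{Remaining points.} First, the spectral bound (c): Gershgorin on $\vA^{\ast}\vA$ via the coherence is too lossy by a factor $N_\tau$, so I would exploit the block structure, fixing $(\tau,f)$ and treating the $\beta$-indexed sub-blocks (whose Gram entries are the array-manifold correlations $\sum_j e^{2\pi iq_j(\beta-\beta')}$) with a matrix-Bernstein or chaining argument over the random manifolds, then summing the $N_\tau N_f$ blocks; the analogous estimate for random column submatrices of an incoherent matrix yields (d). These steps contribute the remaining terms of $p_1$. Second, transcribing Cand\`es--Plan, whose statement is for real matrices with Gaussian noise: I would pass through the real embedding $\CC\hookrightarrow\RR^2$ (or cite the complex variant) and track constants so that the sub-Gaussian bound governing feasibility of the dual certificate matches the chosen $\lambda$ and produces the $\mathcal O((N_\tau N_fN_\beta)^{-2\log 2})$ remainder in $p_2$. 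With (a)--(d) in hand, assembling the pieces delivers \eqref{support2}--\eqref{probbound3a} with the stated probabilities. The step I expect to cost the most effort is the coherence estimate, precisely because one must show that the Kerdock autocorrelation spike --- which is harmless only because it is confined to a single antenna index --- together with the off-diagonal $1/\sqrt p$ bound and the random-array cancellation, never overwhelms the column norm; this is where assumptions \eqref{assumptiondoppler} and \eqref{assumptiondoppler2} are used in full strength.
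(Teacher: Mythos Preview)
Your overall strategy---reduce to the Cand\`es--Plan lasso theorem after verifying coherence, column-norm and operator-norm bounds on the normalised matrix---is exactly the paper's. But the execution has a genuine gap in the coherence estimate, and a methodological difference in the operator-norm estimate.

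\textbf{The gap: the case $(\tau,f)=(\tau',f')$, $\beta\neq\beta'$.} Your ``lone spike'' argument invokes property~(ii) of Theorem~\ref{th:Kerdock}, which applies only when $(f-f',\tau-\tau')\neq(0,0)$. When $(\tau,f)=(\tau',f')$ and $\beta\neq\beta'$, the relevant Gram matrix is $\vS^{\ast}\vS$, whose diagonal is \emph{all} $1$'s, not a single spike. Your deterministic bound on the transmit factor then becomes $N_T+N_T^2/\sqrt{N_s}$; combined with the Hoeffding bound $\lesssim\sqrt{N_R\log(\cdot)}$ on the receive factor and divided by the column norm $\asymp N_RN_T$, this gives coherence of order $\sqrt{\log(\cdot)/N_R}$, which under \eqref{assumptiondoppler2} is only $O(1/\sqrt{\log(\cdot)})$---too weak for the Cand\`es--Plan hypothesis $\mu\lesssim 1/\log(\cdot)$. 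The fix, and the point your sketch underuses throughout, is that the \emph{transmit} manifold $\va_T(\beta)$ is also random (through the $p_j$). The paper treats $\langle M\,\va_T(\beta),\va_T(\beta')\rangle$ as a bilinear form in two independent random unimodular vectors and proves a dedicated Bernstein-type lemma for such forms (four sub-cases, according to whether the diagonal of $M$ is $0$, $1$, and whether the two vectors coincide). This lemma is applied uniformly in all three coherence cases and in the column-norm and operator-norm estimates; in the missing case it turns the diagonal contribution into $\sum_j e^{2\pi i p_j(\beta-\beta')}\lesssim\sqrt{N_T\log(\cdot)}$, after which the bound goes through. Your purely deterministic treatment of the transmit factor happens to suffice for the other two cases, but not here.

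\textbf{The operator norm.} The paper's route is quite different from yours and worth knowing. Rather than $\vA^{\ast}\vA$, it computes $\vA\vA^{\ast}$, permutes the $(j,l)$ row/column indices, and observes that the sum over Doppler produces $N_f\delta_{l,l'}$, so the permuted matrix is block-diagonal with $N_s$ \emph{identical} $N_R\times N_R$ blocks. Everything reduces to bounding one small block, written as $N_f\sum_n c_n X_n$ with $\|X_n\|_{\op}=N_R$ deterministic and $c_n=\langle\vS^{\ast}\vS\,\va_T(\beta_n),\va_T(\beta_n)\rangle$ handled by the same Bernstein lemma (again using the randomness of the $p_k$). Your proposal to work block-wise in $\vA^{\ast}\vA$ and invoke matrix Bernstein is not wrong in spirit, but it does not obviously recover the sharp bound $\|\vA\|_{\op}^2\le 2N_fN_R^2N_T^2$ needed to turn the Cand\`es--Plan sparsity threshold into \eqref{lassosparsity1doppler}. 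Finally, your item (d) is not proved separately in the paper; it is extracted as a by-product of the Cand\`es--Plan argument (their Theorem~3.2) and does not contribute terms to $p_1$.
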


\medskip

\noindent
{\bf Remarks:}
\begin{enumerate}
\item The condition $N_T\le N_R$ in \eqref{assumptiondoppler2} is by no
means necessary, but rather to make our computation a little cleaner. We
could change it into $N_T\le 2N_R$, then the theorem would remain true with
a slightly different probability of success.  

\item It may seem that the conditions in \eqref{assumptiondoppler} and \eqref{assumptiondoppler2} are a bit restrictive. But, in practice, our method works with a broad range of parameters as the simulations show in Section~\ref{s:simulations}.

\if 0
\item We denote $x_k$ the $k$-th scatterer at the receiver array input. The measurement vector $y$ provides $N_RN_s$ measurements of $x_k$. Then we define the signal-to-noise ratio associated with the $k$-th scatterer as SNR$_k= N_RN_s|x_k|^2/{\sigma}^2$.
This is often referred to as the output SNR because it is the effective SNR at the output
of a matched-filter receiver. Then \eqref{amplitudeproperty2} indicates that the SNR for which perfect target detection can be achieved is
\begin{equation}
\textrm{SNR}\ge \textrm{SNR}_{\textrm{min}} :=C\frac{N_s}{N_T}\log N_\tau N_f N_\beta,
\end{equation}
where $C$ is a constant.
\textcolor{red}{Two questions/comments: I do not see how you get this
result. Why the constant $C$?}
\fi
\end{enumerate}

\section{Proof of the result}
\label{s:proof}

To prove Theorem \ref{th:maindoppler}, we use a theorem by Cand\`es and Plan (Theorem~1.3 in~\cite{CP08}) which requires to estimate the operator norm of $\vA$ and the coherence of $\vA$. The original theorem only treats the real-valued case, it can be extended to complex-values case after some straightforward modifications (see Appendix B in \cite{SF12}).

\subsection{Auxiliary results} 

We first need the following Bernstein type lemma.
\begin{lemma}\label{quadlemma} Suppose $M$ is an $m\times m$ matrix,
$\alpha$ and $\beta$ are two joint independent random vectors in $\CC^m$
with zero means and $|\alpha_k|=|\beta_k|=1$ for $k=1,\dots, m$. If $n$ is a positive constant, 
then for any $t>0$ and $s>0$,
\begin{enumerate} \item if $|m_{kj}|\le\frac{1}{\sqrt{n}}$ for all $k,
j$, then

\begin{align}\label{Aabcase1} \Prob\Big( |\langle M\alpha,
\beta\rangle|\le mt \Big)\ge1-4m\exp\Big(-\frac{t^2}{4\frac{m}{n}}\Big).
\end{align} and \begin{align}\label{Aaacase1} \Prob\Big( |\langle M\alpha,
\alpha\rangle|\le 2mt \Big)\ge1-8m\exp\Big(-\frac{t^2}{2\frac{m}{n}}\Big),
\end{align} \item if $|m_{kj}|\le \frac{1}{\sqrt{n}}$ for $k\neq j$
and $m_{jj}=1$, then

\begin{align}\label{Aabcase2} \Prob\Big(
|\langle M\alpha, \beta\rangle|\le  s+mt
\Big)\ge1-4\exp\Big(-\frac{s^2}{4m}\Big)-4m\exp\Big(-\frac{t^2}{4\frac{m}{n}}\Big),
\end{align} and \begin{align}\label{Aaacase2} \Prob\Big(
m(1-2t)\le|\langle M\alpha, \alpha\rangle|\le m(1+2t)
\Big)\ge1-8m\exp\Big(-\frac{t^2}{2\frac{m}{n}}\Big).  \end{align}
\end{enumerate} \end{lemma}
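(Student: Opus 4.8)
The plan is to obtain all four bounds from one elementary ingredient: a Hoeffding estimate for sums of independent, mean-zero, modulus-bounded \emph{complex} random variables (I read the hypothesis ``$\alpha,\beta$ joint independent with zero means'' as: the $2m$ coordinates are mutually independent, each with mean $0$). First I would record that estimate: if $X_1,\dots,X_m$ are independent with $\Exp X_j=0$ and $|X_j|\le b_j$, then splitting $X_j=\Real X_j+i\,\Imag X_j$, applying the usual real Hoeffding inequality to $\sum_j\Real X_j$ and to $\sum_j\Imag X_j$ (each summand lying in an interval of length $2b_j$), and using $\{|\sum_jX_j|>u\}\subseteq\{|\sum_j\Real X_j|>u/\sqrt2\}\cup\{|\sum_j\Imag X_j|>u/\sqrt2\}$, one gets
\begin{equation}
\Prob\Bigl(\Bigl|\sum_{j=1}^{m}X_j\Bigr|>u\Bigr)\ \le\ 4\exp\Bigl(-\frac{u^2}{4\sum_{j=1}^{m}b_j^2}\Bigr).
\label{eq:cplxhoeff}
\end{equation}
I would combine this with the trivial bound $|\langle M\gamma,\delta\rangle|=\bigl|\sum_k(M\gamma)_k\overline{\delta_k}\bigr|\le\sum_k|(M\gamma)_k|\le m\,\|M\gamma\|_\infty$, valid whenever $|\delta_k|=1$ for all $k$.

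For part~(1): each coordinate $(M\alpha)_k=\sum_j m_{kj}\alpha_j$ is a sum of independent mean-zero terms of modulus $\le1/\sqrt n$, so \eqref{eq:cplxhoeff} with $\sum_j|m_{kj}|^2\le m/n$ gives $\Prob(|(M\alpha)_k|>t)\le4\exp(-t^2n/(4m))$, and a union bound over the $m$ rows gives $\Prob(\|M\alpha\|_\infty>t)\le4m\exp(-t^2n/(4m))$. Combining this with the trivial bound for $\delta=\beta$ (which uses only $|\beta_k|=1$) yields \eqref{Aabcase1}; the same with $\delta=\alpha$ but at level $2t$ yields $\Prob(|\langle M\alpha,\alpha\rangle|>2mt)\le4m\exp(-t^2n/m)\le8m\exp(-t^2n/(2m))$, which is \eqref{Aaacase1}.

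For part~(2) I would split $M=\vI_m+M'$, where $M'$ has zero diagonal and off-diagonal entries of modulus $\le1/\sqrt n$, so $M'$ satisfies the hypothesis of part~(1). For \eqref{Aabcase2}: conditioning on $\alpha$, $\langle\alpha,\beta\rangle=\sum_k\alpha_k\overline{\beta_k}$ becomes a sum of independent mean-zero terms of modulus $1$ (this is the one place the independence of $\beta$ from $\alpha$ and $\Exp\beta_k=0$ are used), so \eqref{eq:cplxhoeff} with $b_k=1$ gives $\Prob(|\langle\alpha,\beta\rangle|>s)\le4\exp(-s^2/(4m))$, while part~(1) applied to $M'$ gives $\Prob(|\langle M'\alpha,\beta\rangle|>mt)\le4m\exp(-t^2n/(4m))$; since $\langle M\alpha,\beta\rangle=\langle\alpha,\beta\rangle+\langle M'\alpha,\beta\rangle$, a union bound gives \eqref{Aabcase2}. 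For \eqref{Aaacase2}: because $|\alpha_k|=1$ one has $\langle\alpha,\alpha\rangle=\|\alpha\|_2^2=m$ deterministically, hence $\langle M\alpha,\alpha\rangle=m+\langle M'\alpha,\alpha\rangle$; applying \eqref{Aaacase1} to $M'$ bounds $|\langle M'\alpha,\alpha\rangle|\le2mt$ off an event of probability $\le8m\exp(-t^2n/(2m))$, and on the complement the triangle inequality gives $m(1-2t)\le|\langle M\alpha,\alpha\rangle|\le m(1+2t)$.

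I do not foresee a genuine obstacle: once \eqref{eq:cplxhoeff} is established the rest is accounting. The only delicate points are tracking the constants through the real-to-complex passage that produces \eqref{eq:cplxhoeff} (the factor $4$ out front, the $\tfrac14$ in the exponent), and noticing that the quadratic forms $\langle M\alpha,\alpha\rangle$ need no decoupling or Hanson--Wright argument at all, since $|\langle M\alpha,\alpha\rangle|\le m\|M\alpha\|_\infty$ is already adequate when every entry of $\alpha$ has modulus one --- that crude step is exactly why the bounds for $\langle M\alpha,\alpha\rangle$ cost only an extra factor of $2$ over the bilinear ones.
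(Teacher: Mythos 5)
Your proof is correct, but it follows a genuinely different route from the paper's. The paper expands the bilinear form along circulant diagonals, $\langle M\alpha,\beta\rangle=\sum_{l=1}^{m}\sum_{j=1}^{m}m_{j\oplus l,j}\alpha_j\bar\beta_{j\oplus l}$, applies the complex Hoeffding estimate (Theorem~4.5 in \cite{HRS12}, the same inequality you derive at the outset) to each of the $m$ diagonals, and then uses a union bound and the triangle inequality across diagonals; for the quadratic forms $\langle M\alpha,\alpha\rangle$ the terms within a diagonal are no longer jointly independent, so the paper additionally splits each diagonal into two halves of independent products $\alpha_j\bar\alpha_{j\oplus l}$ (the device of \cite{PRT08}, with the parenthetical assumption that $m$ is even), which is the source of the extra factor $2$ in \eqref{Aaacase1} and \eqref{Aaacase2}. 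You instead use the crude deterministic bound $|\langle M\gamma,\delta\rangle|\le m\|M\gamma\|_\infty$ (valid whenever $|\delta_k|=1$), apply Hoeffding row by row to $(M\alpha)_k$, and union over the $m$ rows; this reproduces all four bounds (with, as you note, an even slightly stronger exponent in the quadratic case), bypasses the decoupling/splitting step and the even-$m$ caveat entirely, and shows that \eqref{Aabcase1} and \eqref{Aaacase1} use no randomness of the outer vector, only its unimodularity. Your treatment of part (2) — peeling off the diagonal via $M=\vI_m+M'$, Hoeffding for $\sum_k\alpha_k\bar\beta_k$, and the identity $\langle\alpha,\alpha\rangle=m$ — matches the paper's. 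Both arguments pay the same factor $m$ (rows for you, diagonals for the paper) relative to the variance heuristic, so neither is sharper where it matters downstream; yours is the more elementary bookkeeping, the paper's keeps the diagonal structure inherited from \cite{PRT08} and \cite{SF12}.
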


\begin{proof} \begin{align*} \langle M\alpha,
\beta\rangle&=\sum_{k,j=1}^mm_{kj}\alpha_j\bar\beta_k\\
&=\sum_{l=1}^m\sum_{j=1}^mm_{j\oplus l,j}\alpha_j\bar\beta_{j\oplus l},
\end{align*}
 where $\oplus$ denotes addition modulo $m$.

 Let us first assume that $|m_{kj}|\le\frac{1}{\sqrt{n}}$.

Since $\alpha$ and $\beta$ are joint independent, then for any $l$, the
entries in $\sum_{j=1}^mm_{j\oplus l, j}\alpha_j\bar\beta_{j\oplus l}$
are all joint independent and it is easy to check that $\Exp(m_{j\oplus
l, j}\alpha_j\bar\beta_{j\oplus l})=0$ and $|m_{j\oplus l,
j}\alpha_j\bar\beta_{j\oplus l}|= |m_{j\oplus l, j}|$, then Theorem
4.5 in \cite{HRS12} will give,

\begin{align} \Prob\Big( |\sum_{j=1}^mm_{j\oplus
l, j}\alpha_j\bar\beta_{j\oplus l}|\le t \Big)&\ge
1-4\exp\Big(-\frac{t^2}{4\sum_{j}|m_{j\oplus l, j}|^2}\Big)\nonumber\\
&\ge1-4\exp\Big(-\frac{t^2}{4\frac{m}{n}}\Big).  \end{align}

We take all $m$ different choices of $l$,
then \begin{align}\label{quad1} \Prob\Big(
|\sum_{l=1}^m\sum_{j=1}^mm_{i\oplus l, j}\alpha_j\bar\beta_{j\oplus
l}|\le mt \Big)\ge1-4m\exp\Big(-\frac{t^2}{4\frac{m}{n}}\Big), \end{align}
which proves \eqref{Aabcase1}.

\begin{equation*} \langle M\alpha,
\alpha\rangle=\sum_{l=1}^m\sum_{j=1}^mm_{j\oplus l,
j}\alpha_j\bar\alpha_{j\oplus l}, \end{equation*} different from above,
the entries in $\sum_{j=1}^mm_{j\oplus l, j}\alpha_j\bar\alpha_{j\oplus
l}$ are no longer all jointly independent. But similar to the proof of
Theorem 5.1 in~\cite{PRT08} and Lemma 3 in \cite{SF12}, we observe that
for any $l$ we can split the index set ${1,\dots,m}$ into two subsets
$T_l^1,T_l^2\subset \{1,\dots,m\}$, each of size $m/2$, such that the
$m/2$ variables $\alpha_j\bar\alpha_{j\oplus l}$ are jointly independent
for $j\in T^1_l$, and analogous for $T^2_l$. (For convenience we assume
here that $m$ is even, but with a negligible modification the argument
also applies for odd $m$.) In other words, each of the sums $\sum_{j\in
T^r_l} m_{j\oplus l, j}\alpha_j\bar\alpha_{j\oplus l}, r=1,2$, contains
only jointly independent terms.

So for each $l$, \begin{equation} \Prob\Big(|\sum_{j\in T^r_l}
m_{j\oplus l, j}\alpha_j\bar\alpha_{j\oplus l}| \le t \Big)
\ge 1-4\exp\Big(-\frac{t^2}{2\frac{m}{n}}\Big), \end{equation}
which implies that \begin{align} \Prob\Big(|\sum_{j} m_{j\oplus
l, j}\alpha_j\bar\alpha_{j\oplus l}| \le 2t\Big) & \ge 1-8
\exp\Big(-\frac{t^2}{2\frac{m}{n}}\Big), \end{align}

Again, we take all $m$ different choices of $l$, then
\begin{align} \Prob\Big( |\sum_{l=1}^m\sum_{j=1}^mm_{j\oplus
l, j}\alpha_j\bar\alpha_{j\oplus l}|\le 2mt
\Big)\ge1-8m\exp\Big(-\frac{t^2}{2\frac{m}{n}}\Big), \end{align} which
proves \eqref{Aaacase1}.

Now let us assume that $|m_{kj}|\le \frac{1}{\sqrt{n}}$ for $k\neq j$
and $m_{jj}=1$.

\begin{align*} \langle M\alpha,
\beta\rangle&=\sum_{j=1}^mm_{jj}\alpha_j\bar\beta_{j}+\sum_{l=1}^{m-1}\sum_{j=1}^mm_{j\oplus
l, j}\alpha_j\bar\beta_{j\oplus l}\\
&=\sum_{j=1}^m\alpha_j\bar\beta_{j}+\sum_{l=1}^{m-1}\sum_{j=1}^mm_{j\oplus
l, j}\alpha_j\bar\beta_{j\oplus l}. \end{align*}

Since $\alpha$ and $\beta$ are joint independent and
$|\alpha_j\bar\beta_j|=1$, \begin{align}\label{diagonalest}
\Prob\Big( |\sum_{j=1}^m\alpha_j\bar\beta_{j}|\le s
\Big)\ge1-4\exp\Big(-\frac{s^2}{4m}\Big).  \end{align}

Similar to the proof of \eqref{quad1} above, we have that
\begin{align} \Prob\Big( |\sum_{l=1}^{m-1}\sum_{j=1}^mm_{j\oplus
l, j}\alpha_j\bar\beta_{j\oplus l}|\le (m-1)t
\Big)\ge1-4(m-1)\exp\Big(-\frac{t^2}{4\frac{m}{n}}\Big), \end{align}
together with \eqref{diagonalest}, it follows 
\begin{align} \Prob\Big( |\langle M\alpha, \beta\rangle|\le s+(m-1)t
\Big)\ge1-4\exp\Big(-\frac{s^2}{4m}\Big)-4(m-1)\exp\Big(-\frac{t^2}{4\frac{m}{n}}\Big),
\end{align} which proves \eqref{Aabcase2}.

\begin{equation*} \langle M\alpha,
\alpha\rangle=\sum_{j=1}^mm_{jj}+\sum_{l=1}^{m-1}\sum_{j=1}^mm_{j\oplus l,
j}\alpha_j\bar\alpha_{j\oplus l}=m+\sum_{l=1}^{m-1}\sum_{j=1}^mm_{j\oplus
l, j}\alpha_j\bar\alpha_{j\oplus l},  \end{equation*}
then \eqref{Aaacase2} results from similar proof as for \eqref{Aaacase1}
and the triangle inequality.  \end{proof}

\subsection{Estimation of the Operator Norm}

\begin{lemma}\label{th:normbounddoppler} Let $\vA$
be the matrix in Theorem \ref{th:maindoppler} satisfying \eqref{assumptiondoppler}. Then
\begin{equation} 
\Prob\Big(\|\vA\|^2_{\text{op}}\le 2N_fN_R^2N_T^2\Big)\ge 1-8N_\tau^{-2}
N_R^{-1}.  \end{equation} \end{lemma}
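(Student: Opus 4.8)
The strategy is to peel off the Kronecker and time--frequency structure of $\vA$ so that the operator norm is controlled by a product of two pieces, each amenable to an elementary concentration argument in the antenna positions. Write the columns of $\vS_{\tau_j,f_k}$ as $\vM_{f_k}\vT_{\tau_j}\vS$ and set $\vv_l:=\vS\,\va_T(\beta_l)$, so the column of $\vA$ indexed by $(\beta_l,\tau_j,f_k)$ is $\va_R(\beta_l)\otimes(\vM_{f_k}\vT_{\tau_j}\vv_l)$. Expanding $\vA\vA^\ast$, conjugating the unitaries $\vI_{N_R}\otimes\vM_{f_k}$ out of each rank--one summand, and applying the triangle inequality over the $N_f$ Doppler bins gives $\|\vA\|_{\op}^2=\|\vA\vA^\ast\|_{\op}\le N_f\,\bigl\|\sum_{l=1}^{N_\beta}(\va_R(\beta_l)\va_R(\beta_l)^\ast)\otimes C_l\bigr\|_{\op}$, where $C_l:=\sum_{j=1}^{N_\tau}\vT_{\tau_j}\vv_l\vv_l^\ast\vT_{\tau_j}^\ast$. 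Since every summand is positive semidefinite, the elementary bound $\sum_l A_l\otimes B_l\preceq(\max_l\|B_l\|_{\op})(\sum_l A_l)\otimes\vI$ for PSD $A_l,B_l$ yields $\|\vA\|_{\op}^2\le N_f\bigl(\max_{1\le l\le N_\beta}\|C_l\|_{\op}\bigr)\|\vV_R\|_{\op}^2$, where $\vV_R=[\va_R(\beta_1)\,|\,\cdots\,|\,\va_R(\beta_{N_\beta})]$.

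For $\max_l\|C_l\|_{\op}$: because $\Delta_\tau=\Delta_s$ and $N_\tau=N_s$, the delays $\tau_1,\dots,\tau_{N_\tau}$ run through all cyclic shifts, so $C_l$ is circulant and is diagonalized by $\vF_{N_s}$ with eigenvalues $N_s|\widehat{\vv_l}(\omega)|^2$, i.e.\ $\|C_l\|_{\op}=N_s\|\widehat{\vv_l}\|_\infty^2$. Now $\widehat{\vv_l}(\omega)=\sum_{m=1}^{N_T}e^{2\pi i p_m\beta_l}\widehat{\vk_m}(\omega)$; since $\beta_l=2l/(N_RN_T)$ with $l\ge 1$, the phases $e^{2\pi i p_m\beta_l}$ are i.i.d.\ uniform on the unit circle (hence mean zero), and by Property~(iv) of Theorem~\ref{th:Kerdock} the transmitted Kerdock waveforms have unimodular Fourier coefficients, $|\widehat{\vk_m}(\omega)|=N_s^{-1/2}$. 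Thus $N_s^{1/2}\widehat{\vv_l}(\omega)$ is a sum of $N_T$ independent mean--zero unit--modulus numbers; a scalar Hoeffding/Bernstein bound (the linear case behind Lemma~\ref{quadlemma}) plus a union bound over all $N_s N_\beta$ pairs $(\omega,l)$ gives $\|\widehat{\vv_l}\|_\infty^2\le c_1 N_T\log(N_\tau N_fN_\beta)/N_s$ for all $l$ off an event of polynomially small probability, hence $\max_l\|C_l\|_{\op}\le c_1 N_T\log(N_\tau N_fN_\beta)$.

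For $\|\vV_R\|_{\op}^2$: write $\vV_R\vV_R^\ast=N_\beta\vI_{N_R}+E$; since $\Exp[\va_R(\beta_l)\va_R(\beta_l)^\ast]=\vI_{N_R}$ for every $l\ge 1$ we have $\Exp E=0$, and $E$ is Hermitian with vanishing diagonal and off--diagonal entries $E_{m,m'}=\sum_{l=1}^{N_\beta}e^{2\pi i(\phi_m-\phi_{m'})l}$ (a shifted Dirichlet kernel), with $\phi_m:=2q_m/(N_RN_T)$ i.i.d.\ uniform on $[0,1]$. One computes $\Exp|E_{m,m'}|^2=N_\beta$, but what is needed is the \emph{operator} norm: one controls the quadratic form $\vz^\ast\vV_R\vV_R^\ast\vz=\sum_{l=1}^{N_\beta}\bigl|\sum_m z_m e^{2\pi i\phi_m l}\bigr|^2$, whose mean is $N_\beta\|\vz\|_2^2$ and whose variance is $O(N_\beta\|\vz\|_2^4)$, by a concentration estimate together with a union bound over an $\varepsilon$--net of the unit sphere of $\CC^{N_R}$, obtaining $\|\vV_R\|_{\op}^2\le c_2 N_\beta\le c_2 N_RN_T/2$ (using $\Delta_\beta=2/(N_RN_T)$, hence $N_\beta\le N_RN_T/2$), again off an event of polynomially small probability. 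Combining, $\|\vA\|_{\op}^2\le c\,N_f N_T\log(N_\tau N_fN_\beta)\cdot N_RN_T$ with high probability; by \eqref{assumptiondoppler2}, $\log(N_\tau N_fN_\beta)\le N_T^{1/2}\le N_R^{1/2}$, so $c\log(N_\tau N_fN_\beta)\le 2N_R$ and therefore $\|\vA\|_{\op}^2\le 2N_f N_R^2 N_T^2$; adding the two failure probabilities with the constants in the Bernstein bounds and the net cardinality chosen appropriately yields the stated $8N_\tau^{-2}N_R^{-1}$.

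The step I expect to be the main obstacle is the estimate $\|\vV_R\|_{\op}^2=O(N_\beta)$. The entries of $E$ are heavy--tailed (Dirichlet kernels at random frequency differences) and, more seriously, are \emph{not} independent — they are all determined by the same $N_R$ phases $\phi_1,\dots,\phi_{N_R}$ — so matrix Bernstein for independent sums is unavailable and a naive union bound over pairs $(m,m')$ is far too lossy. One must instead prove a ``random--frequency large--sieve'' concentration inequality for $\sum_{l\le N_\beta}\bigl|\sum_m z_m e^{2\pi i\phi_m l}\bigr|^2$ and pay for an $\varepsilon$--net on the sphere in $\CC^{N_R}$; this is precisely where the hypotheses $N_T\le N_R$ and $N_T\ge\log^2(N_\tau N_fN_\beta)$ are spent — the first so that the $C_l$--factor and the $\vV_R$--factor balance against the target $2N_fN_R^2N_T^2$, the second so that $\log(N_\tau N_fN_\beta)$ is dominated by $\sqrt{N_RN_T}$ and the net is affordable.
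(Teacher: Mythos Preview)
Your tensor splitting $\|\vA\|_{\op}^2\le N_f\,(\max_l\|C_l\|_{\op})\,\|\vV_R\|_{\op}^2$ is valid, but it manufactures a hard subproblem that the paper never faces. You correctly flag $\|\vV_R\|_{\op}^2=O(N_\beta)$ as the crux, and you do not actually prove it: a random--Vandermonde large--sieve inequality plus an $\varepsilon$--net over $\CC^{N_R}$ is sketched but not carried out, and the dependence among the entries of $E$ makes this nontrivial. Even granting that step, your combined bound is $\|\vA\|_{\op}^2\le c\,N_fN_RN_T^2\log(N_\tau N_fN_\beta)$, and you absorb the logarithm via $\log(\cdot)\le\sqrt{N_R}$, which comes from \eqref{assumptiondoppler2}. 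The lemma, however, assumes only \eqref{assumptiondoppler}; so your argument proves a weaker statement than the one claimed. (The assertion that the two failure probabilities add up to exactly $8N_\tau^{-2}N_R^{-1}$ is also unsubstantiated once a net is in play.)

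The paper sidesteps the receive--array issue completely. Rather than split the tensor product, it permutes $\vB=\vA\vA^\ast$ (viewed as an $N_R\times N_R$ array of $N_s\times N_s$ blocks) into an $N_s\times N_s$ array $\vC$ of $N_R\times N_R$ blocks, $\vC_{[l,j;l',j']}:=\vB_{[j,l;j',l']}$. Computing the $(l,l')$ block directly, the sum over the Doppler index collapses to $N_f\delta_{l,l'}$, so $\vC$ is block--\emph{diagonal} with all diagonal blocks equal to a single $N_R\times N_R$ matrix
\[
\vC_{1,1}=N_f\sum_{n=0}^{N_RN_T-1}c_n\,X_n,\qquad c_n=\sum_{k,k'=1}^{N_T}e^{2\pi i(p_k-p_{k'})\frac{n}{N_RN_T}}\langle\vk_k,\vk_{k'}\rangle,\quad (X_n)_{j,j'}=e^{2\pi i(q_j-q_{j'})\frac{n}{N_RN_T}}.
\]
Each $X_n$ is rank one with $\|X_n\|_{\op}=N_R$ \emph{deterministically}, so no randomness in the $q_j$'s is used at all. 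The only probabilistic input is a bound on $|c_n|$: this is $\langle\vS^\ast\vS\,\alpha,\alpha\rangle$ for the independent unit--modulus vector $\alpha_k=e^{2\pi i p_k n/(N_RN_T)}$, with coefficient matrix having diagonal $1$ and off--diagonal $\le N_s^{-1/2}$ by Property~(i) of Theorem~\ref{th:Kerdock}. Case \eqref{Aaacase2} of Lemma~\ref{quadlemma} with $t=2\sqrt{(N_T/N_s)\log(N_\tau N_RN_T)}$ gives $|c_n|\le 2N_T$ using only \eqref{assumptiondoppler}; a union bound over the $N_RN_T$ values of $n$ yields the stated failure probability, and the triangle inequality finishes: $\|\vA\|_{\op}^2=\|\vC_{1,1}\|_{\op}\le N_f\cdot N_RN_T\cdot 2N_T\cdot N_R$.

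A smaller gap in your $C_l$ bound: you invoke $|\widehat{\vk_m}(\omega)|=N_s^{-1/2}$ from Property~(iv), but the frequency--domain unimodularity in that property is stated for $k=1,\dots,p$, whereas the Kerdock waveform $\vk_0=\vu_{0,j}$ is a column of $\vF_p$ and has $\widehat{\vk_0}$ equal to a standard basis vector. This is repairable, but the argument as written does not cover it.
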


\begin{proof}

Since $\|\vA\|^2_{\text{op}}=\|\vA\vA^*\|_{\text{op}}$, we consider matrix
$\vB=\vA\vA^*$ as block matrix $$ \begin{bmatrix} \vB_{1,1}            &
\vB_{1,2}   & \dots      & \vB_{1,N_R} \\ \vdots             & \ddots    &
& \vdots    \\ \vB_{N_R,1}  &   \dots        &            & \vB_{N_R,N_R}
\end{bmatrix}, $$ where the blocks $\{\vB_{j,j'}\}_{j,j'=1}^{N_R}$
are matrices of size $N_t \times N_t$.

Via a simple permutation, we can turn $\vB$ into a matrix $\vC$ with
blocks $\{\vC_{l,l'}\}_{l,l'=1}^{N_s}$ of size $N_R\times N_R$, where
the $(j,j')$-th entry of the block $\vC_{l,l'}$ is defined as \begin{align}
&\vC_{[l,j;l'j']}=\vB_{[j,l;j',l']}  =  (\vA \vA^{\ast})_{[j,l;j',l']}
= \sum_{\beta} \sum_{\tau} \sum_{f}^{}\vA_{[j,l;\tau,f,\beta]}
\overline{\vA_{[j',l';\tau,f,\beta]}}\nonumber \\ &=\sum_{\beta} e^{2\pi i
(q_j-q_{j'})\beta}\sum_{k=1}^{N_T} \sum_{k'=1}^{N_T} e^{2\pi i
(p_k-p_{k'})\beta} \langle \vT_l\vk_k, \vT_{l'}\vk_{k'}\rangle\sum_{m=1}^{N_f}
e^{2\pi i(l-l')\Delta_tm\Delta_f }\nonumber\\ &=N_f\delta_{l,l'}\sum_{\beta}
e^{2\pi i(q_j-q_{j'})\beta}\sum_{k=1}^{N_T}
\sum_{k'=1}^{N_T} e^{2\pi i(p_k-p_{k'})\beta}\langle \vT_l\vk_k,
\vT_{l'}\vk_{k'}\rangle.
\end{align}

Then it is easy to see that $\vC$ is block-diagonal, and all the
diagonal-blocks are identical. So we only have to bound the first block
$\vC_{1,1}$.

\begin{align} &\vC_{[1,j;1,j']}=N_f\sum_{\beta} e^{2\pi i
(q_j-q_{j'})\beta}\sum_{k=1}^{N_T} \sum_{k'=1}^{N_T} e^{2\pi i
(p_k-p_{k'})\beta} \langle \vk_k, \vk_{k'}\rangle\nonumber\\
&=N_f\sum_{n=0}^{N_RN_T-1} e^{2\pi i(q_j-q_{j'})
\frac{n}{N_RN_T}}\sum_{k=1}^{N_T} \sum_{k'=1}^{N_T} e^{2\pi i(p_k-p_{k'})
\frac{n}{N_RN_T}} \langle \vk_k, \vk_{k'}\rangle.\nonumber\\
\end{align}

Define $c_n=\sum_{k=1}^{N_T}\sum_{k'=1}^{N_T} e^{2\pi i
(p_k-p_k')\frac{n}{N_RN_T}}\langle \vk_k, \vk_{k'}\rangle$, then
$$\vC_{1,1}=N_f\sum_{n=0}^{N_RN_T-1}c_nX_n,$$ where $X_n$ is the
matrix-valued random variable given by $(X_n)_{j,j'}=e^{2\pi i
(q_j-q_{j'})\frac{n}{N_RN_T}}$ and therefore $\|X_n\|_{\text{op}}=N_R$.


Note that  $\Exp(e^{2\pi i(p_k-p_{k'})n})=0$ and $|\langle \vk_k, \vk_{k'}\rangle|\le \frac{1}{\sqrt{N_s}}$ 
for $k\neq k'$. Choosing
$t=2\sqrt{\frac{N_T}{N_s}}\sqrt{\log N_\tau N_RN_T}$ in \eqref{Aaacase2}
of Lemma \ref{quadlemma}, we arrive at

$$\Prob\Big(|c_n|\le N_T(1+4\sqrt{\frac{N_T}{N_s}}\sqrt{\log N_\tau
N_RN_T})\Big)\ge 1-8N_T(N_\tau N_RN_T)^{-2},$$ then the assumption in
\eqref{assumptiondoppler} implies that $16N_T\log N_\tau
N_RN_T\le N_s$, therefore
$$\Prob\Big(|c_n|\le 2N_T\Big)\ge
1-8N_T(N_\tau N_RN_T)^{-2}.$$

We apply the union bound over the $N_R N_T$ possibilities associated
with $n$ and get $$\Prob\Big(\max |c_n|\le 2N_T\Big)\ge 1-8N_\tau^{-2}
N_R^{-1},$$ which implies that $$\Prob\Big(\|\vC_{1,1}\|_{\text{op}}\le
2N_fN_R^2N_T^2\Big)\ge 1-8N_\tau^{-2} N_R^{-1}.$$

Then the fact that $\|\vB\|_{\text{op}}=\|\vC\|_{\text{op}}=\|\vC_{1,1}\|_{\text{op}}$
will give us the conclusion.


\end{proof}

\subsection{Estimation of the Coherence}

\begin{lemma} \label{th:coherencedoppler} 
Let $\vA$ be the matrix in Theorem \ref{th:maindoppler} satisfying \eqref{assumptiondoppler} and \eqref{assumptiondoppler2}. Then 
\begin{equation} 
\underset{(\tau,\dopp,\beta)\neq (\tau',\dopp',\beta')}{\max} \big|\langle
\vA_{\tau,\dopp,\beta},\vA_{\tau',\dopp',\beta'} \rangle \big| \le
16N_R\log N_\tau N_fN_RN_T
\label{eq:coherencebounddoppler} 
\end{equation} 
with probability at
least 
$$1-8N_\tau^{-2}N_f^{-2}-4N_TN_\tau^{-2}N_f^{-2}-4(N_\tau
N_f)^{-1}-4N_\tau^{-3}N_f^{-3} N_R^{-2} N_T^{-1}-8 N_T^{-2}(
N_\tau N_fN_R)^{-3}.$$  
\end{lemma}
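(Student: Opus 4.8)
The plan is to estimate the inner product of two distinct columns of $\vA$ by expanding it explicitly using the tensor structure in \eqref{dopplermatrix}, splitting into the cases dictated by the peculiar autocorrelation/crosscorrelation behavior of Kerdock codes (Theorem~\ref{th:Kerdock}), and then applying the Bernstein-type estimates of Lemma~\ref{quadlemma} to the randomness coming from the antenna positions $p_j, q_j$.

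\medskip

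\noindent\textbf{Step 1: Expand the inner product.} Using $\vA_{\tau,f,\beta} = \va_R(\beta) \otimes \vS_{\tau,f}\va_T(\beta)$, I would write
$$
\langle \vA_{\tau,f,\beta}, \vA_{\tau',f',\beta'} \rangle
= \langle \va_R(\beta), \va_R(\beta') \rangle \cdot
\langle \vS_{\tau,f}\va_T(\beta), \vS_{\tau',f'}\va_T(\beta') \rangle .
$$
The first factor is $\sum_{j} e^{2\pi i q_j(\beta-\beta')}$; by the choice $\Delta_\beta = 2/(N_R N_T)$ this is a ``random Fourier sum'' in the variables $q_j$. The second factor expands as $\sum_{k,k'} e^{2\pi i(p_k\beta - p_{k'}\beta')} \langle \vM_f \vT_\tau \vk_k, \vM_{f'}\vT_{\tau'}\vk_{k'}\rangle = \sum_{k,k'} e^{2\pi i(p_k\beta - p_{k'}\beta')} \langle \vM_{f-f'}\vT_{\tau-\tau'}\vk_k, \vk_{k'}\rangle$ up to a phase. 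So the whole inner product becomes a double sum over $k,k'$ and over the antenna indices, with the Kerdock correlation $\langle \vM_{f-f'}\vT_{\tau-\tau'}\vk_k,\vk_{k'}\rangle$ as coefficients.

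\medskip

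\noindent\textbf{Step 2: Case split using Kerdock properties.} The coefficients $\langle \vM_g\vT_l\vk_k,\vk_{k'}\rangle$ behave very differently depending on whether $k=k'$ or $k \ne k'$, and (when $k=k'$) whether $(g,l)=(0,0)$. By Theorem~\ref{th:Kerdock}(iii), the off-diagonal ($k\ne k'$) coefficients are all bounded by $1/\sqrt{p} = 1/\sqrt{N_s}$; by (ii), for fixed $(g,l)\ne(0,0)$ the diagonal coefficients $\langle \vM_g\vT_l\vk_k,\vk_k\rangle$ are nonzero (and equal to~$1$) for at most one value $k_0$ of $k$. This is the crucial structural fact: the ``bad'' large diagonal terms occur for at most one antenna. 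I would organize the argument as (a) $\beta=\beta'$ (forcing $(\tau,f)\ne(\tau',f')$), handled separately since then the $q$-sum contributes the full $N_R$; and (b) $\beta\ne\beta'$, where the $q$-sum is a genuine random exponential sum to which Lemma~\ref{quadlemma} part (1) applies, giving a bound of order $\sqrt{N_R \log(\cdot)}$. Inside each case, split the $k,k'$-double-sum into the diagonal $k=k'$ part (to which \eqref{Aaacase1} or \eqref{Aaacase2} applies, treating the autocorrelation coefficients as the matrix $M$ with $n = N_s$) and the off-diagonal part (to which \eqref{Aabcase1} applies, again with $n=N_s$), plus the isolated $k_0$ term which is bounded crudely by $N_R$ (or by $\sqrt{N_R\log(\cdot)}$ after the $q$-sum).

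\medskip

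\noindent\textbf{Step 3: Collect, union bound, optimize.} Each application of Lemma~\ref{quadlemma} produces a tail term; choosing the free parameters $t,s$ proportional to $\sqrt{\log(N_\tau N_f N_\beta)}$ (times the appropriate scaling $\sqrt{m/n}$ or $\sqrt{m}$) makes each individual inner product bounded by $O(N_R \log(N_\tau N_f N_R N_T))$ with failure probability polynomially small, and the assumptions \eqref{assumptiondoppler}, \eqref{assumptiondoppler2} (in particular $N_s = N_\tau$ large and $N_T\le N_R$, $\log^2 N_\tau N_f N_\beta \le N_T$) are exactly what is needed to absorb various constants and lower-order terms. Finally I take a union bound over all $\binom{N_\tau N_f N_\beta}{2}$ pairs of columns; the polynomial decay of each tail beats the $(N_\tau N_f N_\beta)^2$ factor, yielding the stated probability as the sum of the six displayed error terms.

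\medskip

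\noindent\textbf{Main obstacle.} The delicate point is the bookkeeping around Theorem~\ref{th:Kerdock}(ii): I must be careful that when $(\tau-\tau', f-f') \ne (0,0)$ there is exactly one ``resonant'' antenna index $k_0$ whose diagonal autocorrelation term is of size $1$ rather than $1/\sqrt{N_s}$, and that this single term (multiplied by at most $N_R$ from the $q$-sum when $\beta=\beta'$, or $\sqrt{N_R\log}$ when $\beta\ne\beta'$) does not spoil the target bound $16 N_R\log(N_\tau N_f N_R N_T)$ — it is here that we genuinely exploit having \emph{multiple} transmit antennas, since with $N_T=1$ this resonant term is the whole sum and the coherence is $\Theta(N_R)$ with no logarithmic savings. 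A secondary nuisance is verifying that the hypotheses of Lemma~\ref{quadlemma} are met — the relevant matrices $M$ (Gram-type matrices of translated/modulated Kerdock vectors) must have the required $\ell_\infty$-bound $1/\sqrt{N_s}$ off the diagonal, which again is precisely Theorem~\ref{th:Kerdock}(i),(iii) — and that the $p_j,q_j$ enter as independent mean-zero unimodular random variables, which holds because $e^{2\pi i p_k n/(N_R N_T)}$ for $p_k$ uniform on $[0,N_RN_T/2]$ and integer $n\ne 0$ has mean zero.
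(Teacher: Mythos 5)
Your proposal follows essentially the same route as the paper's proof: factor each column inner product into the receive-array term and the transmit Gram term, use the Kerdock properties to bound the Gram entries by $1/\sqrt{N_s}$ off the resonance, isolate the single resonant autocorrelation term, apply the Bernstein-type Lemma~\ref{quadlemma} (and the Hoeffding bound for the $q$-sum) with $t,s\sim\sqrt{\log}$, and finish with a union bound under \eqref{assumptiondoppler}--\eqref{assumptiondoppler2}; the paper merely organizes the same ingredients into three explicit cases ($\beta\neq\beta'$ with $(\tau,f)=(0,0)$, $\beta\neq\beta'$ with $(\tau,f)\neq(0,0)$, and $\beta=\beta'$). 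The only quibble is cosmetic: the mean-zero exponential should be $e^{2\pi i p_k(\beta-\beta')}$ with $\beta-\beta'$ a nonzero multiple of $\Delta_\beta=2/(N_RN_T)$, which is indeed mean-zero as you intend.
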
 

\begin{proof} 
We need to find an upper bound for 
\begin{equation*} \max{|\langle \vA_{\tau,f,\beta},\vA_{\tau',f',\beta'} 
\rangle |} \qquad \text{for $(\tau,f,\beta) \neq (\tau',f',\beta')$}.  
\end{equation*} 
Recall the $\Std=\vM_f\vT_\tau \vS$, it follows from the definition that 
\begin{equation*} 
\vA_{\tau,\dopp,\beta} = \aR \otimes (\Std \aT), 
\end{equation*} 
from which we readily compute
\begin{equation*} 
|\langle \vA_{\tau,f,\beta},\vA_{\tau',f',\beta'} \rangle| = 
|\langle \aR, \aRp \rangle| |\langle \vS_{\tau,f} \aT , \vS_{\tau',f'} \aTp \rangle|.  
\end{equation*} 
We use the discretization $\beta = n \Delta_\beta$, $\beta' = n' \Delta_\beta$,
where $\Delta_\beta = \frac{2}{N_R N_T}$, $n,n' = 1,\dots,N_\beta$,
with $N_\beta = N_R N_T$.

Since \begin{equation*} 
|\langle \Std \aT , \Stdp \aTp \rangle| = |\langle \vS_{\tau - \tau',f-f'} \aT ,
\vS\aTp \rangle| 
\end{equation*} 
for $\tau,\tau'=0,\dots,N_\tau-1, f,f'=0,\dots,N_f-1$. 
We can confine the range of values for $\tau,\tau'$ to 
$\tau'=0, \tau=0,\dots,N_\tau-1$ and $f,f'$ to $f'=0, f=0,\dots, N_f-1$, 
then we only need to estimate $|\langle \Std \aT , \vS \aTp \rangle|$. 
We now consider three cases.

\noindent {\bf Case (i) $\beta\neq \beta', \tau =0, f=0$:}

By Theorem 4.5 in \cite{HRS12}, for any $t_1>0$ \begin{equation*} \Prob\Big( |\langle
\aR ,\aRp \rangle|\ge t_1 \Big) \le 4 \exp \Big(-\frac{t_1^2}{4N_R}\Big),
\end{equation*}
choosing $t_1=2\sqrt2\sqrt{N_R}\sqrt{\log N_\tau N_fN_RN_T}$ will give
us that \begin{equation}\label{Aabest} \Prob\Big(|\langle \aR ,\aRp
\rangle|\le 2\sqrt2\sqrt{N_R}\sqrt{\log N_\tau N_fN_RN_T} \Big)\ge
1-4(N_\tau N_fN_RN_T)^{-2}.  \end{equation}

Define $M=\vS^{\ast}\vS$ then $|m_{kj}|=|\langle \vk_k,
\vk_j\rangle|\le\frac{1}{\sqrt{N_s}}$ for $k\neq j$ and
$m_{jj}=1$. We choose $s=2\sqrt2\sqrt{N_T}\sqrt{\log N_\tau N_f
N_RN_T}$ and $t=2\sqrt2\sqrt{\frac{N_T}{N_s}}\sqrt{\log N_\tau
N_fN_RN_T}$ in \eqref{Aabcase2} of Lemma \ref{quadlemma} and get
\begin{align*} \Prob\Big( | \langle \vS^{\ast}\vS \aT ,\aTp \rangle|\le
(2\sqrt2\sqrt{N_T}+2\sqrt2 N_T \sqrt{\frac{N_T}{N_s}})\sqrt{\log N_\tau
N_fN_RN_T}\Big)\nonumber\\ \ge1-4(N_\tau N_fN_RN_T)^{-2}-4N_T(N_\tau
N_fN_RN_T)^{-2}, \end{align*} combined with \eqref{Aabest},
\begin{align*} \Prob\Big(|\langle \vA_{\tau,f,\beta}, \vA_{\tau,f',\beta'}
\rangle|\le 8(\sqrt{N_RN_T}+N_T \sqrt{\frac{N_RN_T}{N_s}})\log N_\tau
N_fN_RN_T  \Big)\nonumber\\ \ge 1-8(N_\tau N_fN_RN_T)^{-2}-4N_T(N_\tau
N_fN_RN_T)^{-2}.  \end{align*}

After taking the union bound over $(N_RN_T)^2$ different possibilities
associated with $\beta, \beta'$, we will have that

\begin{align*} \Prob\Big(\max|\langle \vA_{\tau,f,\beta},
\vA_{\tau,f',\beta'} \rangle|\le 8(\sqrt{N_RN_T}+N_T
\sqrt{\frac{N_RN_T}{N_s}})\log N_\tau N_fN_RN_T  \Big)\nonumber\\ \ge
1-8N_\tau^{-2}N_f^{-2}-4N_TN_\tau^{-2}N_f^{-2}.  \end{align*}

A little algebra, using \eqref{assumptiondoppler} and \eqref{assumptiondoppler2},
shows that 
$$8(\sqrt{N_RN_T}+N_T
\sqrt{\frac{N_RN_T}{N_s}})\le 16N_R,$$
therefore
\begin{align}\label{eq:case1} \Prob\Big(
\max|\langle \vA_{\tau,f,\beta},\vA_{\tau,f,\beta'}\rangle| \le
16N_R\log N_\tau N_fN_RN_T
\Big)\nonumber\\ \ge  1-8N_\tau^{-2}N_f^{-2}-4N_TN_\tau^{-2}N_f^{-2}.
\end{align} \medskip \noindent {\bf Case (ii) $\beta\neq \beta', (\tau,
f) \neq (0, 0)$:}

For the same reason, here holds \eqref{Aabest}.

Define $C= \vS_{\tau,f}^{\ast}\vS$, from the properties of $\vk_j$, 
we have that $$|c_{kj}|=|\langle \vM_f\vT_\tau \vk_k, \vk
_j\rangle|\le\frac{1}{\sqrt{N_s}}\  \text{for} \ k\neq j$$ and
there exists $j_0$ such that $|c_{j_0j_0}|=1$ and $c_{jj}=0$
for $j\neq j_0$. Then $$|\langle\vS_{\tau,f}^{\ast}\vS\aT,\aTp
\rangle|\le1+|\langle C'\aT,\aTp \rangle|$$ where $C' $ is a
zero-diagonal matrix which coincides $C$ at off-diagonal
entries. Certainly $C'$ satisfies the condition for \eqref{Aabcase1}
to hold. Choosing $t=4\frac{\sqrt{N_T}}{\sqrt{N_s}}\sqrt{\log
N_\tau N_fN_RN_T} $ in \eqref{Aabcase1} of Lemma
\ref{quadlemma} yields \begin{align}\label{betabeta}
\Prob\Big( |\langle\vS_{\tau,f}^{\ast}\vS\aT,\aTp \rangle |\le
1+4\frac{N_T\sqrt{N_T}}{\sqrt{N_s}}\sqrt{\log N_\tau N_fN_RN_T}|
\Big)\nonumber\\ \ge 1-4 N_T( N_\tau N_fN_RN_T)^{-4}, \end{align}
from the assumption that $32N_T^3\log N_\tau N_fN_RN_T\le N_s$,
together with \eqref{Aabest}, we will get \begin{align*} \Prob\Big(
|\langle \vA_{\tau,f,\beta},\vA_{\tau',f',\beta'}\rangle| \le
4\sqrt2\sqrt{N_R}\sqrt{\log N_\tau N_fN_RN_T}
\Big)\nonumber\\ \ge 1-4(N_\tau N_fN_RN_T)^{-2}-4 N_T( N_\tau
N_fN_RN_T)^{-4}.  \end{align*}

By \eqref{assumptiondoppler2}, we deduce $\log N_\tau N_fN_RN_T\le N_R$. Therefore

 \begin{align*} \Prob\Big(
|\langle \vA_{\tau,f,\beta},\vA_{\tau',f',\beta'}\rangle| \le
4\sqrt2N_R\Big)\nonumber \ge 1-4(N_\tau N_fN_RN_T)^{-2}-4 N_T( N_\tau
N_fN_RN_T)^{-4}.  \end{align*}

We apply the union bound over $N_\tau N_fN_R^2N_T^2$ possibilities and
arrive at

\begin{align}\label{eq:case2} 
\Prob\Big(\max|\langle
\vA_{\tau, f, \beta}, \vA_{\tau', f', \beta'} \rangle|\le
4\sqrt{2}N_R\Big) \ge 1-4(N_\tau N_f)^{-1}-4N_\tau^{-3}N_f^{-3} N_R^{-2}
N_T^{-1}.  \end{align}

\medskip \noindent {\bf Case (iii) $\beta  = \beta',(\tau, f) \neq
(0, 0)$:}

Note that the matrix $C= \vS_{\tau,f}^{\ast}\vS$ has exactly the same
properties as in Case (ii) above. Following the same argument as we show
\eqref{betabeta} and applying \eqref{Aaacase1} of Lemma \ref{quadlemma}
combined with the assumption as in \eqref{assumptiondoppler} gives us that

\begin{align*} \Prob\Big( |\langle\vS_{\tau,f}^{\ast}\vS\aT,\aT
\rangle |\le 1+4\sqrt{2}\frac{N_T\sqrt{N_T}}{\sqrt{N_s}}\sqrt{\log
N_\tau N_fN_RN_T} \Big)\\ \ge 1-8 N_T( N_\tau N_fN_RN_T)^{-4},
\end{align*} which implies that \begin{align*} \Prob\Big(
|\langle \vA_{\tau,f,\beta},\vA_{\tau',f',\beta'}\rangle| \le
2N_R \Big) \ge 1-8 N_T^{-3}( N_\tau N_fN_R)^{-4}, \end{align*}

We apply the union bound over the $N_\tau N_fN_R N_T$ possibilities
associated with $\tau$, $f$ and $\beta$

\begin{equation} \label{eq:case3} \Prob\Big(\max |\langle
\vA_{\tau,f,\beta}, \vA_{\tau',f',\beta} \rangle| \le
2N_R\Big) \ge1-8N_T^{-2}(N_\tau N_f N_R)^{-3}.  \end{equation}
\eqref{eq:case1}, \eqref{eq:case2} and \eqref{eq:case3} will give the
conclusion.

\end{proof}

\subsection{Regarding the matrix with unit-norm columns}

In order to apply Theorem~1.3 in~\cite{CP08}, we need to normalize the columns
of $\vA$. We first have the following result which shows the lower and upper bounds of the
norm of columns of $\vA$.

\begin{lemma}\label{columnnorm} Let $\vA$ be
defined as in Theorem \ref{th:maindoppler} satisfying \eqref{assumptiondoppler}, then
\begin{align} \Prob\Big(\frac{1}{3}N_RN_T\le \min \|\vA_{\tau,f,\beta}\|_2^2\le\max 
\|\vA_{\tau,f,\beta}\|_2^2\le\frac{5}{3}N_RN_T
\Big)\ge1-8N_\tau^{-2}N_R^{-1}.  \end{align}
\end{lemma}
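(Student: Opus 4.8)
The plan is to compute $\|\vA_{\tau,f,\beta}\|_2^2$ explicitly and apply the concentration estimate \eqref{Aaacase2} from Lemma \ref{quadlemma}. Using the tensor structure $\vA_{\tau,f,\beta} = \aR \otimes (\Std\aT)$ together with $\|\vu\otimes\vv\|_2 = \|\vu\|_2\|\vv\|_2$, and the fact that $\aR$ has all entries of modulus one so $\|\aR\|_2^2 = N_R$, we get
$$
\|\vA_{\tau,f,\beta}\|_2^2 = N_R \|\Std\aT\|_2^2 = N_R \langle \vS_{\tau,f}^\ast\vS_{\tau,f}\aT, \aT\rangle.
$$
Since $\vS_{\tau,f} = \vM_f\vT_\tau\vS$ is $\vS$ composed with a unitary, $\vS_{\tau,f}^\ast\vS_{\tau,f} = \vS^\ast\vS =: M$, the matrix whose entries are the inner products $\langle\vk_k,\vk_j\rangle$ of the Kerdock waveforms. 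By Property (i) of Theorem \ref{th:Kerdock} (mutual unbiasedness of the distinct orthonormal bases), $M$ has unit diagonal and off-diagonal entries bounded by $1/\sqrt{p} = 1/\sqrt{N_s}$, so $M$ satisfies the hypothesis of part (2) of Lemma \ref{quadlemma} with $n = N_s$ and $m = N_T$.

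Next I would invoke \eqref{Aaacase2} with the vector $\alpha = \aT$, which indeed has zero mean (since each $p_j$ is uniform on $[0,\frac{N_RN_T}{2}]$ so $\Exp e^{2\pi i p_j \beta} = 0$ for $\beta$ a nonzero multiple of $\Delta_\beta = \frac{2}{N_RN_T}$ — note $\beta \in \{\Delta_\beta,\dots,N_\beta\Delta_\beta\}$ is never an integer here since $N_\beta = N_RN_T$, so the exponent $p_j\beta$ ranges over a nontrivial interval) and entries of modulus one. Choosing $t$ so that $2t \le \frac{1}{3}$ — concretely $t$ of order $\sqrt{(N_T/N_s)\log(\cdots)}$, which the assumption $32N_T^3\log N_\tau N_fN_\beta \le N_s$ forces to be small — gives
$$
\Prob\Big(\tfrac{2}{3}N_T \le |\langle M\aT,\aT\rangle| \le \tfrac{4}{3}N_T\Big) \ge 1 - 8N_T\exp\!\Big(-\tfrac{t^2 N_s}{2N_T}\Big),
$$
and with the stated choice of $t$ the failure probability becomes a negative power of $N_\tau N_fN_RN_T$. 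Multiplying through by $N_R$ yields $\frac{2}{3}N_RN_T \le \|\vA_{\tau,f,\beta}\|_2^2 \le \frac{4}{3}N_RN_T$ for a single column, which comfortably sits inside the claimed interval $[\frac13 N_RN_T, \frac53 N_RN_T]$ (leaving slack for the union bound). Finally I would take a union bound over all $N_\tau N_fN_\beta = N_\tau N_fN_RN_T$ columns; choosing the constant in $t$ large enough (essentially matching the choice already made in the operator-norm lemma, $t \sim 2\sqrt{N_T/N_s}\sqrt{\log N_\tau N_RN_T}$) makes the total failure probability at most $8N_\tau^{-2}N_R^{-1}$, matching the statement.

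I do not expect a genuine obstacle here; this is the easiest of the three matrix-estimate lemmas, essentially a one-shot application of \eqref{Aaacase2}. The only points requiring a little care are: (a) confirming the zero-mean hypothesis for $\aT$ really holds given the discretization $\Delta_\beta = 2/(N_RN_T)$ and the range of $n$ (so that no column index makes $p_j\beta$ deterministic), and (b) bookkeeping the constants so that after the union bound the probability is exactly the one claimed and the interval $[\frac13,\frac53]N_RN_T$ is respected rather than the tighter $[\frac23,\frac43]N_RN_T$ — the looser interval is presumably chosen to absorb the $\log$ factors cleanly. Everything else is routine given Lemma \ref{quadlemma} and Property (i) of Theorem \ref{th:Kerdock}.
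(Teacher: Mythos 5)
Your proposal is essentially the paper's proof: the same identity $\|\vA_{\tau,f,\beta}\|_2^2=N_R\langle\vS^\ast\vS\aT,\aT\rangle$ (using that $\vM_f\vT_\tau$ is unitary), the same application of \eqref{Aaacase2} with off-diagonal bound $1/\sqrt{N_s}$ from mutual unbiasedness, and the same use of \eqref{assumptiondoppler} to make $2t$ small; the zero-mean point you worry about in (a) does hold, though for the right reason, namely that $\beta\cdot\frac{N_RN_T}{2}=n$ is a nonzero integer for every grid point (whether or not $\beta$ itself is an integer is irrelevant). The one bookkeeping correction concerns your union bound: since the random quantity $\langle\vS^\ast\vS\aT,\aT\rangle$ does not depend on $(\tau,f)$ at all, the paper unions only over the $N_\beta=N_RN_T$ values of $\beta$, which with $t=2\sqrt{N_T/N_s}\sqrt{\log N_\tau N_RN_T}$ (so $2t\le 2/3$, giving the $[\frac13,\frac53]N_T$ interval directly) yields exactly $8N_\tau^{-2}N_R^{-1}$; if you instead count all $N_\tau N_fN_\beta$ columns as distinct events with that same $t$, you would get $8N_f N_\tau^{-1}N_R^{-1}$, and "enlarging the constant in $t$" does not cleanly recover the stated bound without further relating $N_f$ to the other parameters. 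Since the events for different $(\tau,f)$ are literally identical, this is redundancy rather than error, but the clean way to hit the stated probability is the $(\tau,f)$-independence observation.
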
 \begin{proof} Recall that \begin{align*}
\|\vA_{\tau,f,\beta}\|_2^2 = \|\aR\|_2^2 \|\vS_{\tau,f} \aT\|_2^2&=N_R
\langle\vS_{\tau,f}^\ast\vS_{\tau,f} \aT, \aT \rangle\\ &=N_R \langle
\vS^\ast \vS \aT, \aT \rangle.  \end{align*}

Setting $t=2\sqrt{\frac{N_T}{N_s}}\sqrt{\log N_\tau N_RN_T}$ in
\eqref{Aaacase2} of Lemma \ref{quadlemma} yields \begin{align*}
\Prob\Big(N_T(1-4\sqrt{\frac{N_T}{N_s}}\sqrt{\log N_\tau N_RN_T})\le
| \langle \vS^{\ast}\vS \aT ,\aT \rangle|\le  \\ 
N_T(1+4\sqrt{\frac{N_T}{N_s}}\sqrt{\log N_\tau N_RN_T})\Big)
\ge1-8N_T(N_\tau N_RN_T)^{-2}.  \end{align*}

An easy calculation from \eqref{assumptiondoppler} leads
$$4\sqrt{\frac{N_T}{N_s}}\sqrt{\log N_\tau N_RN_T}\le\frac{2}{3},$$
which indeed implies \begin{align} \Prob\Big( \frac{1}{3}N_T \le| \langle \vS^{\ast}\vS
\aT ,\aT \rangle|\le  \frac{5}{3}N_T\Big)\ge1-8N_T(N_\tau N_RN_T)^{-2}.
\end{align} Since the above probability does not depend on $\tau$ or $f$,
we take all $N_RN_T$ possibilities of $\beta$ and conclude the proof of
the lemma.  \end{proof}

\begin{corollary} Suppose $\tilde{\vA}=\vA\vD^{-1}$ where $\vD$
is the $N_\tau N_fN_\beta\times N_\tau N_fN_\beta$ diagonal matrix
defined by $\vD_{(\tau,f,\beta),(\tau,f,\beta)} = \|\vA_{\tau,
f,\beta}\|_2$, or in other words $\tilde\vA$ is the matrix with
unit-norm columns from $\vA$. Then \begin{equation}\label{tildenorm}
\Prob\Big(\|\tilde\vA\|^2_{\text{op}}\le  6N_fN_R
N_T\Big)\ge1-16N_\tau^{-2}N_R^{-1},  \end{equation} and
\begin{equation}\label{tildecoh} \Prob \Big(\mu\big(\tilde{\vA}\big) \le
  48\frac{\log N_\tau N_fN_RN_T}{N_T}\Big)
\ge 1-p_3, \end{equation} where \begin{align*}
p_3=8N_\tau^{-2}N_R^{-1}+8N_\tau^{-2}N_f^{-2}+4N_TN_\tau^{-2}N_f^{-2}+4(N_\tau
N_f)^{-1}\\ +4N_\tau^{-3}N_f^{-3} N_R^{-2} N_T^{-1}+8 N_T^{-2}( N_\tau
N_fN_R)^{-3}.  \end{align*} \end{corollary}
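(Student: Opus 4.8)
The plan is purely to combine the three auxiliary estimates already in hand — Lemma~\ref{th:normbounddoppler} for $\|\vA\|_{\op}$, Lemma~\ref{columnnorm} for the two-sided control of the column norms, and Lemma~\ref{th:coherencedoppler} for the largest inner product between distinct columns — via submultiplicativity of the operator norm and two union bounds. No new probabilistic input is needed; the argument is entirely bookkeeping of the failure events.

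For \eqref{tildenorm} I would start from $\tilde\vA=\vA\vD^{-1}$ and write $\|\tilde\vA\|_{\op}\le\|\vA\|_{\op}\,\|\vD^{-1}\|_{\op}$. Since $\vD$ is diagonal with positive entries $\|\vA_{\tau,f,\beta}\|_2$, one has $\|\vD^{-1}\|_{\op}^2 = 1/\min_{\tau,f,\beta}\|\vA_{\tau,f,\beta}\|_2^2$, and Lemma~\ref{columnnorm} gives $\min_{\tau,f,\beta}\|\vA_{\tau,f,\beta}\|_2^2 \ge \tfrac13 N_RN_T$ outside an event of probability $8N_\tau^{-2}N_R^{-1}$. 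Intersecting with the event $\{\|\vA\|_{\op}^2\le 2N_fN_R^2N_T^2\}$ of Lemma~\ref{th:normbounddoppler} (whose complement also has probability $8N_\tau^{-2}N_R^{-1}$) gives $\|\tilde\vA\|_{\op}^2 \le 2N_fN_R^2N_T^2\cdot 3/(N_RN_T) = 6N_fN_RN_T$, and the union bound produces the stated failure probability $16N_\tau^{-2}N_R^{-1}$.

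For \eqref{tildecoh}, note that dividing by $\vD$ exactly normalizes the columns, so $\mu(\tilde\vA)$ equals the maximum over $(\tau,f,\beta)\neq(\tau',f',\beta')$ of $|\langle\vA_{\tau,f,\beta},\vA_{\tau',f',\beta'}\rangle|$ divided by $\|\vA_{\tau,f,\beta}\|_2\,\|\vA_{\tau',f',\beta'}\|_2$. On the event of Lemma~\ref{th:coherencedoppler} the numerator is at most $16N_R\log N_\tau N_fN_RN_T$; on the lower-bound part of the event of Lemma~\ref{columnnorm} each column norm is at least $\sqrt{\tfrac13 N_RN_T}$, so the denominator is at least $\tfrac13 N_RN_T$. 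Hence $\mu(\tilde\vA)\le 16N_R\log N_\tau N_fN_RN_T/(\tfrac13 N_RN_T) = 48\,\frac{\log N_\tau N_fN_RN_T}{N_T}$, and adding the $8N_\tau^{-2}N_R^{-1}$ from the column-norm event to the failure probability of Lemma~\ref{th:coherencedoppler} reproduces $p_3$ term by term.

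The only points requiring a bit of care — and certainly not genuine obstacles — are that the column-norm event of Lemma~\ref{columnnorm} is invoked in both parts, so it should be stated once and then intersected with whatever else is needed, and that the events coming from Lemmas~\ref{th:coherencedoppler} and~\ref{columnnorm} are not assumed independent, which is why a plain union bound (rather than anything sharper) is the appropriate tool. Everything else is the arithmetic of multiplying the three bounds together, which is precisely why the constants $6$ and $48$ in \eqref{tildenorm} and \eqref{tildecoh} come out as clean as they do.
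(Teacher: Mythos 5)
Your proposal is correct and matches the paper's argument: the paper's proof of this corollary is just the remark that it follows directly from Lemmas~\ref{th:normbounddoppler}, \ref{th:coherencedoppler} and~\ref{columnnorm}, and your write-up (submultiplicativity with the $\tfrac13 N_RN_T$ lower bound on column norms, plus union bounds whose failure probabilities add to $16N_\tau^{-2}N_R^{-1}$ and to $p_3$) is exactly the intended bookkeeping, with the constants $6$ and $48$ coming out as stated.
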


\begin{proof} This corollary is a direct consequence of Lemma
\ref{th:normbounddoppler}, Lemma \ref{th:coherencedoppler} and Lemma
\ref{columnnorm}.  \end{proof}

\subsection{Assembling the proof of Theorem~\ref{th:maindoppler}}  \label{ss:assemblingproof}

\medskip \noindent \begin{proof}{(of Theorem~\ref{th:maindoppler})} Recall that we are trying to use 
Theorem~1.3 in~\cite{CP08} to prove Theorem~\ref{th:maindoppler}. We thus
need to verify that all assumptions of that theorem are satisfied.

We first point out that the assumptions of Theorem~\ref{th:maindoppler}
imply that the conditions of Lemma~\ref{th:normbounddoppler} and
Lemma~\ref{th:coherencedoppler} are fulfilled.

Note that solution $\tilde{\vx}$ of~\eqref{Lasso} and the solution
$\tilde{\vxt}$ of the following lasso problem \begin{equation}
\underset{\vxt}{\min}\, \frac{1}{2}\|\vA \vD^{-1}\vxt - \vy\|_2^2 +
\lambda \|\vxt\|_1, \qquad \text{with}\,\,\lambda = 2 \sigma\sqrt{2
\log(N_\tau N_R N_T)}, \label{lassoDI} \end{equation} are related by
$\tilde{\vx} = \vD^{-1}\tilde{\vxt}$.

We will first establish the claims in Theorem~\ref{th:maindoppler}
for the system $\vAt \vxt = \vy$  where  $\vAt = \vA \vD^{-1}$, $\vxt =
\vD\vx$.

First, the assumption~\eqref{amplitudeproperty2} and the fact that $\vz =
\vD \vx$ imply that \begin{equation} |z_k| \ge \frac{8\sqrt{3}
\min\|\vA_{\tau,f,\beta}\|_2}{\sqrt{N_R N_T}} \sigma\sqrt{2\log
(N_\tau N_fN_\beta)}\ge 8 \sigma \sqrt{2\log (N_\tau N_fN_\beta)},
\qquad \text{for $k \in S$,} \label{zcond1} \end{equation} with
probability at least $1-8N_\tau^{-2}N_R^{-1}$,thus establishing
the first condition of Theorem~1.3 in~\cite{CP08}.

Using the assumptions in Theorem \ref{th:maindoppler}, 
and the coherence bound~\eqref{tildecoh} we
compute \begin{equation}\label{normalcoh}\mu(\vAt) \le 48\frac{\log N_\tau N_fN_RN_T}{N_T}
\le  \frac{48}{\log N_\tau N_fN_\beta},\end{equation} which
holds with probability as in~\eqref{tildecoh}, and thus the coherence
property (1.5) in~\cite{CP08} is fulfilled.

Furthermore, using~\eqref{tildenorm} we see that
condition~\eqref{lassosparsity1doppler} implies \begin{equation}
S \le \frac{c_0 N_\tau}{ \log (N_\tau N_fN_\beta)} \le \frac{6c_0
N_\tau N_fN_\beta}{\|\vAt\|_{\op}^2 \log (N_\tau N_fN_\beta)}
\label{lassosparsity2} \end{equation} with probability at least
$1-16N_\tau^{-2}N_R^{-1}.$ Thus the sparsity assumption of Theorem~1.3 in~\cite{CP08}
 is also fulfilled and we obtain that
\begin{equation} \label{support4} \supp (\tilde{\vxt}) = \supp
(\vxt), \end{equation} with probability at least $1-p_1$. We note
that the relation $\supp(\tilde{\vx}) = \supp(\vx)$ holds with the
same probabiltity as the relation $\supp(\tilde{\vxt}) = \supp(\vxt)$, since $\supp(\vxt) = \supp(\vx)$
and multiplication by an invertible diagonal matrix does not change
the support of a vector. This establishes~\eqref{support2} with the
corresponding probability.

Once we have recovered the support of $\vx$, call it $I$, we can solve 
for the coefficients of $\vx$ by solving the standard least
squares problem $\min \|\vA_I \vx_I - \vy\|_2$, where $\vA_I$ is tbe
submatrix of $\vA$ whose columns correspond to the support set $I$,
and similarly for $\vx_I$. Note that the proof
of Theorem~3.2 in~\cite{CP08} yields as side result that 
with high probability the eigenvalues of any submatrix $\vA_I^{\ast} \vA_I$ 
with $|I| \le S$ are contained in the interval $[1/2, 3/2]$, 
which of course implies that $\kappa(\vA_I) \le \sqrt{3}$. By substituting this 
bound into the standard error bound, ~(5.8.11) in~\cite{HJ90}, we have that
\begin{equation} \frac{\|\tilde{\vxt} - \vxt \|_2}{\|\vxt\|_2}
    \le \frac{\sigma \sqrt{3N_R N_s}}{\|\vy\|_2}
\label{error4} \end{equation} which holds with probability at least
\begin{equation} \big(1 - p_1)(1 - p_2\big).  \end{equation}

Using the fact that $\tilde{\vz} = \vD \tilde{\vx}$, we compute $$
\frac{1}{\kappa(\vD)} \frac{\|\tilde{\vx} - \vx \|_2}{\|\vx\|_2} \le
\frac{\|\vD(\tilde{\vx} - \vx) \|_2}{\|\vD\vx\|_2}  = \frac{\|\tilde{\vxt}
- \vxt \|_2}{\|\vxt\|_2}, $$ or, equivalently, \begin{equation}
 \frac{\|\tilde{\vx} - \vx \|_2}{\|\vx\|_2} \le
\kappa(\vD) \frac{\|\tilde{\vxt} - \vxt
\|_2}{\|\vxt\|_2}.
\label{z2x} \end{equation} The bound~\eqref{error2} follows now from
combining~\eqref{error4}, ~\eqref{z2x} with the fact that $\kappa(\vD)\le 5$ (from Lemma~\ref{columnnorm}) .

\end{proof}

\section{Extension of the main result: Beyond Kerdock codes}
\label{s:fos}

In this section, we present a modified version of Theorem~\ref{th:maindoppler} 
that applies to waveforms that satisfy slightly more restrictive incoherence 
conditions. As such, Theorem~\ref{th:new} below does not hold for Kerdock 
waveforms, but the advantage compared to Theorem~\ref{th:maindoppler} is 
that the result also applies to radar systems with only one transmit antenna.

\begin{theorem} \label{th:new} 
Consider $\vy =
\vA \vx +\vw$, where $\vA$ is defined as in \eqref{dopplermatrix} and
$\vw_j \in {\mathcal CN}(0,\sigma^2)$. 
Suppose the transmission waveforms $\vs_j$'s satisfy the following conditions
\begin{align}
\label{coherence_cond1}
|\langle \vs_j, \vM_f \vT_\tau\rangle & \vs_j|\le
\frac{\gamma}{\sqrt{p}} \ \ \textrm{for} \ \ (f, \tau)\neq(0,0), \\
\label{coherence_cond2}
|\langle \vs_k, \vM_f \vT_\tau\rangle & \vs_j|\le
\frac{\gamma}{\sqrt{p}} \ \ \textrm{for} \ \  k \neq j,
\end{align}
where $\gamma>0$ is a fixed constant. Assume that the positions of
the transmit and receive antennas  $p_j$'s and $q_j$'s are chosen
i.i.d. uniformly on $[0,\frac{N_RN_T}{2}]$ at random. Choose the discretization
stepsizes to be $\Delta_\beta = \frac{2}{N_R N_T}$,$\Delta_\tau = \frac{1}{2B}$, $\Delta_f = \frac{1}{T}$
 and suppose that
\begin{equation}\label{newcondi1} \max\big(\gamma^2N_RN_T, 16\gamma^2N_T\log^3
N_\tau N_fN_\beta\big)\le N_s=N_\tau , \end{equation} and also
\begin{equation}\label{newcondi2}\gamma^2N_T\log^4 N_\tau N_fN_\beta\le N_sN_R,\ \ \   \log^2N_\tau N_fN_\beta\le N_T\le N_R.  \end{equation}
Then if the rest of the conditions of Theorem \ref{th:maindoppler} hold, we have the same conclusion as in Theorem \ref{th:maindoppler}. 
\end{theorem}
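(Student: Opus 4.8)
The plan is to run the proof of Theorem~\ref{th:maindoppler} essentially verbatim: re-establish the three auxiliary estimates -- the operator-norm bound (Lemma~\ref{th:normbounddoppler}), the coherence bound (Lemma~\ref{th:coherencedoppler}), and the two-sided column-norm bound (Lemma~\ref{columnnorm}) -- under the hypotheses of Theorem~\ref{th:new}, and then feed them into Theorem~1.3 of~\cite{CP08} exactly as in Section~\ref{ss:assemblingproof}. The only conceptual change is that the role played by parts~(ii)--(iii) of Theorem~\ref{th:Kerdock} in the Kerdock proof is now taken over by \eqref{coherence_cond1}--\eqref{coherence_cond2}: these state that every off-diagonal entry of $\vS^\ast\vS$, and \emph{every} entry of $\vS_{\tau,f}^\ast\vS$ (diagonal included) whenever $(\tau,f)\neq(0,0)$, has modulus at most $\gamma/\sqrt{p}=\gamma/\sqrt{N_s}$. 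Consequently, wherever the Kerdock proof invokes Lemma~\ref{quadlemma} with parameter $n=N_s$, one now invokes it with $n=N_s/\gamma^2$; since $\gamma$ is a fixed constant it is harmless in the exponents of Lemma~\ref{quadlemma} and enters only through the sizes of the deviations, which is why the sampling conditions \eqref{newcondi1}--\eqref{newcondi2} must be strengthened relative to \eqref{assumptiondoppler}--\eqref{assumptiondoppler2}.

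First I would redo the operator-norm estimate. The block-diagonalization of $\vA\vA^\ast$, the reduction to the scalars $c_n=\langle\vS^\ast\vS\,\alpha,\alpha\rangle$, and the union bound over the $N_RN_T$ values of $n$ are unchanged; applying \eqref{Aaacase2} of Lemma~\ref{quadlemma} with $m=N_T$, $n=N_s/\gamma^2$ and $t\asymp\gamma\sqrt{N_T/N_s}\,\sqrt{\log N_\tau N_fN_RN_T}$, the clauses $\gamma^2 N_RN_T\le N_s$ and $16\gamma^2 N_T\log^3(\cdot)\le N_s$ of \eqref{newcondi1} comfortably force $2t\le\tfrac23$ and preserve the probability budget, so $|c_n|\le 2N_T$ and hence $\|\vA\|_{\op}^2\le 2N_fN_R^2N_T^2$. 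When $N_T=1$ the matrix $\vS^\ast\vS$ is the scalar $1$, so $c_0=1$ deterministically and no randomness is consumed -- this is precisely what makes the single-transmit-antenna regime admissible. The column-norm lemma is handled the same way, applying \eqref{Aaacase2} to $\langle\vS^\ast\vS\,\aT,\aT\rangle$ with the same $t$, giving $\tfrac13 N_RN_T\le\|\vA_{\tau,f,\beta}\|_2^2\le\tfrac53 N_RN_T$ (again exactly $N_R$ when $N_T=1$).

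Next I would redo the coherence estimate, case by case as in Lemma~\ref{th:coherencedoppler}. Case~(i) ($\beta\neq\beta'$, $\tau=f=0$) goes through with the factor $\gamma$ inserted: $\vS^\ast\vS$ has unit diagonal and off-diagonal entries $\le\gamma/\sqrt{N_s}$, so \eqref{Aabcase2} applies with $n=N_s/\gamma^2$, and the target bound $16N_R\log N_\tau N_fN_RN_T$ survives the ``little algebra'', now using $\gamma^2 N_RN_T\le N_s$ and $N_T\le N_R$ (which together give $\gamma^2 N_T^2\le N_s$, the bound needed to absorb the cross term). Cases~(ii) and~(iii) ($(\tau,f)\neq(0,0)$) in fact simplify: because \eqref{coherence_cond1} forces all entries of $C=\vS_{\tau,f}^\ast\vS$ to be $\le\gamma/\sqrt{N_s}$, there is no ``$+1$'' diagonal term to peel off, and \eqref{Aabcase1} (resp.\ \eqref{Aaacase1}) applies directly to $\langle C\aT,\aTp\rangle$ (resp.\ $\langle C\aT,\aT\rangle$). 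Here the remaining clauses $16\gamma^2 N_T\log^3(\cdot)\le N_s$ and $\gamma^2 N_T\log^4(\cdot)\le N_sN_R$ of \eqref{newcondi1}--\eqref{newcondi2} are calibrated exactly so that, after multiplying by $|\langle\aR,\aRp\rangle|\le 2\sqrt2\sqrt{N_R\log N_\tau N_fN_RN_T}$ (or by $\|\aR\|_2^2=N_R$ when $\beta=\beta'$) and taking the union bounds, every case lands at $O\big(N_R\log N_\tau N_fN_RN_T\big)$. Collecting the three cases recovers the coherence bound of Lemma~\ref{th:coherencedoppler} with the same probability.

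With these three estimates, the column-normalized matrix $\tilde{\vA}=\vA\vD^{-1}$ obeys $\|\tilde{\vA}\|_{\op}^2\le 6N_fN_RN_T$ and $\mu(\tilde{\vA})\lesssim \log(N_\tau N_fN_RN_T)/N_T\le 48/\log(N_\tau N_fN_\beta)$ (using $N_\beta=N_RN_T$ and $\log^2 N_\tau N_fN_\beta\le N_T$ from \eqref{newcondi2}), together with the two-sided column-norm bound, all with probabilities of the same form as in the Kerdock case. The assembly is then word-for-word Section~\ref{ss:assemblingproof}: verify the minimum-amplitude hypothesis of Theorem~1.3 in~\cite{CP08} via \eqref{amplitudeproperty2} and Lemma~\ref{columnnorm}, the coherence hypothesis~(1.5) of~\cite{CP08}, and the sparsity hypothesis $S\le c_0N_\tau/\log(N_\tau N_fN_\beta)$; deduce exact support recovery for the debiased lasso; and obtain the $\ell_2$-error bound from the $[1/2,3/2]$-eigenvalue bound on $\vA_I^\ast\vA_I$ (the side result of Theorem~3.2 of~\cite{CP08}) together with $\kappa(\vD)\le5$ from Lemma~\ref{columnnorm}. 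The main obstacle is purely bookkeeping: carrying the fixed constant $\gamma$ and the extra logarithmic factors through every union bound so that $\|\tilde\vA\|_{\op}$ and $\mu(\tilde\vA)$ land at the \emph{same order} as in the Kerdock case -- so that Theorem~1.3 of~\cite{CP08} returns literally the same conclusion -- and checking that the exponents $\log^3$ and $\log^4$ in \eqref{newcondi1}--\eqref{newcondi2} are exactly those needed to kill each deviation term, the tightest constraint arising in Cases~(ii)--(iii) of the coherence estimate, where the absence of the Kerdock autocorrelation structure must be compensated by this slack.
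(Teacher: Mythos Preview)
Your proposal is correct and follows essentially the same approach as the paper's proof, which is itself only a brief sketch: rerun the operator-norm, column-norm, and coherence estimates with $n=N_s/\gamma^2$ in Lemma~\ref{quadlemma}, noting (as you do) that for $(\tau,f)\neq(0,0)$ the diagonal of $\vS_{\tau,f}^\ast\vS$ is now small rather than containing a single unimodular entry, and then check that \eqref{newcondi1}--\eqref{newcondi2} are calibrated so that the normalized coherence still lands at $O\big(1/\log(N_\tau N_fN_\beta)\big)$. Your write-up is in fact more explicit than the paper's---the paper only records the modified Case~(ii) bound $8\sqrt{2}\,\gamma N_T\sqrt{N_TN_R}/\sqrt{N_s}\cdot\log N_\tau N_fN_RN_T$ and asserts that \eqref{newcondi1}--\eqref{newcondi2} suffice---so nothing is missing.
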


\begin{proof}
The proof of this theorem is similar to the one of Theorem \ref{th:maindoppler}. The main difference will arise when estimating the coherence of the matrix $\vA$. \eqref{eq:case1} will remain the same except with an extra $\gamma$ factor. From the conditions of the waveforms above, \eqref{eq:case2} becomes 

\begin{align*}
\Prob\Big(\max|\langle
\vA_{\tau, f, \beta}, \vA_{\tau', f', \beta'} \rangle|\le
8\sqrt{2}\frac{\gamma N_T\sqrt{N_TN_R}}{\sqrt{N_s}}\log N_\tau N_fN_RN_T\Big)\\
 \ge 1-4(N_\tau N_f)^{-1}-4N_\tau^{-3}N_f^{-3} N_R^{-2}
N_T^{-1},  \end{align*}
and we have to change \eqref{eq:case3} in the same manner. 

Then from the new conditions \eqref{newcondi1} and \eqref{newcondi2}, we
will have a similar estimate for the coherence of the normalized matrix $\vAt$ as in \eqref{normalcoh}.
\end{proof}

There are several examples of signal sets that satisfy the above conditions.
Perhaps the most intriguing example is the finite harmonic oscillator system (FHOS) 
constructed in \cite{GHS08}. This signal set in $\CC^p$ (where $p$ is a 
prime number) of cardinality $\mathcal{O}(p^3)$
satisfies~\eqref{coherence_cond1} and~\eqref{coherence_cond2}
with $\gamma=4$. An elementary construction of the FHOS for prime number 
$p\ge5$ can be found in \cite{WG11}. We illustrate the performance
of the FHOS and its comparison to Kerodck codes in numerical simulations 
in the next section.

\section{Numerical simulations}
\label{s:simulations}

In this section we will demonstrate the performance of our algorithms via
numerical simulations. We use the Matlab Toolbox TFOCS (\cite{BCG11}) and
choose in TFOCS Auslender and Teboulle's single-projection method to
solve~\eqref{Lasso}. The main computational costs per iteration of this
method are the operations $\vA x$ and $\vA^* y$. One can of course make 
$\vA$ explicit and do the regular matrix multiplication, but due to the 
special structure of $\vA$, we make the following observation to 
accelerate the computation. 

Recall that 
\begin{equation*}
\vA_{\tau,f, \beta} = \aR \otimes (\vS_{\tau,f} \aT).
\end{equation*}

First suppose we have $x$ given and we want to compute $y=\vA x$. Here $y$ is an $N_RN_s\times 1$ vector and $x$ is an $N_\tau N_fN_\beta\times 1$ vector. Instead of doing the direct matrix-vector multiplication, we divide $y$ into $N_R$ blocks $y_j$, each of which is of size $N_s\times 1$. Then
\begin{align*}
y_j&=\sum_{\tau,\beta,f} \aR_j \vS_{\tau,f}\aT x(\tau,\beta,f)\\
&=\sum_\beta \aR_j\sum_{\tau,f}\vS_{\tau,f}\aT x(\tau,\beta,f).
\end{align*}
Since for any fixed $\beta$ and $f$, we can consider
$x_{\beta,f}(\tau)=x(\tau,\beta,f)$ as an $N_\tau\times 1$ vector. Then,
recalling that $\vS_{\tau,f}\aT=\vM_f \vT_\tau (\vS \aT)$, an easy
observation yields 
\begin{align*}
y_j &=\sum_\beta  \aR_j\sum_f \vM_f\Big(\sum_\tau \vT_\tau\vS\aT x(\tau,\beta,f)\Big)\nonumber \\
&=\sum_\beta  \aR_j\sum_f \vM_f\Big(\vS\aT\ast x_{\beta,f}\Big),
\end{align*}
where the convolution can be implemented via FFT.

Now we suppose that $y$ is given and we want to compute $x=\vA^\ast y$. Note that in this case we have that the row vector of $\vA^\ast$ is of the form
$$\vA_{\tau,f,\beta}^\ast = \aR^\ast \otimes (\vS_{\tau,f} \aT)^\ast.$$

We divide $x$ into $N_\beta N_f$ blocks $x_{\beta,f}$, each of which is of size $N_\tau$. We also divide $y$ into $N_R$ blocks $y_j$, each of which is of size $N_s$. 
 $$x_{\beta,f}=\sum_{j=1}^{N_R} \overline{\aR_j} \vC_{\beta,f} y_j,$$
where $\vC_{\beta,f}$ is the matrix whose rows are  $(\vS_{\tau,f}\aT)^\ast=(\vM_f\vS_\tau\aT)^\ast$, an easy calculation leads us to 
$$x_{\beta,f}=\sum_{j=1}^{N_R} \overline{\aR_j} \vB_{\beta} (\vM_fy_i),$$
where $\vB_\beta$ is the matrix whose rows are $(\vS_\tau\aT)^\ast$. So $\vB_\beta$ is a circulant matrix and its first column $c_\beta$ is that $c_\beta(1)=\overline{\vS\aT(1)}$ and $c_\beta(k)=\overline{\vS\aT(N_s-k)}$ for $k=2, \dots, N_s$. Then we have $\vB_\beta( \vM_f y_j)=c_\beta \otimes \vM_f y_j$, which implies 
 $$x_{\beta,f}=\sum_{j=1}^{N_R} \overline{\aR_j} (c_\beta\otimes \vM_fy_j).$$

In each experiment, the locations of the transmit and receive antennas are chosen i.i.d. randomly on $[0, \frac{N_RN_T}{2}]$ and $S$ scatterers are placed randomly on the range-azimuth-Doppler grid, i.e the vector $x$ has $S$ entries at random locations along the vector. White Gaussian
noise is added to the composite data vector $\vA x$ with variance $\sigma^2$ determined to produce
the specified output signal-to-noise ratio. The lasso solution $\hat{x}$ is calculated with $\lambda$ as specified in Theorem \ref{th:maindoppler}. 
The experiment is repeated 50 times. 
Each experiment uses independent noise realization.

The probabilities of detection $P_d$ and false alarm $P_{fa}$ are computed as follows. The values of
the estimated vector $\hat{x}$ corresponding to the true scatterer locations are compared to a threshold.
Detection is declared whenever a value exceeds the threshold. The probability of detection is
defined as the number of detections divided by the total number of scatterers $S$. Next the values
of the estimated vector $\hat{x}$ corresponding to locations not containing scatterers are compared to
the same threshold. A false alarm is declared whenever one of these values exceeds the threshold. The
probability of false alarm is defined as the number of false alarms divided by $n-S$, where $n$ is the signal dimension. The probabilities of detection and false alarm are averaged over the 50 repetitions
of the experiment.

The probabilities are computed for a range of values of the threshold to produce the so-called
Receiver Operating Characteristics (ROC) [14, 28, 25] - the graph of $P_d$ vs. $P_{fa}$. As the threshold
decreases, the probability of detection increases and so does the probability of false alarm. In
practice the threshold is usually adjusted to achieve a specified probability of false alarm.
We note that the probability of detection increases as the SNR increases and decreases as $S$, the
number of scatterers increases.

\begin{figure}[h!]
\centering
\begin{tabular}{c}
  \includegraphics[width=80mm]{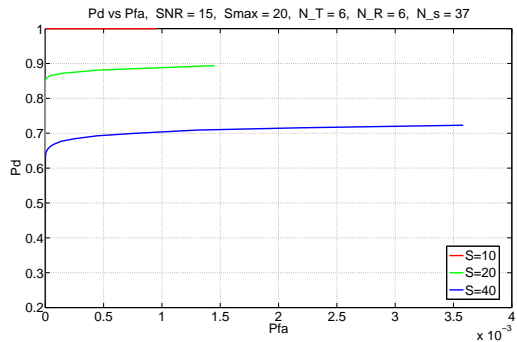} 
 
\end{tabular}

 \caption{MIMO, Kerdock, SNR=15} \label{SNR=15}
\end{figure}

\begin{figure}[h!]
\centering
\begin{tabular}{c}
  \includegraphics[width=80mm]{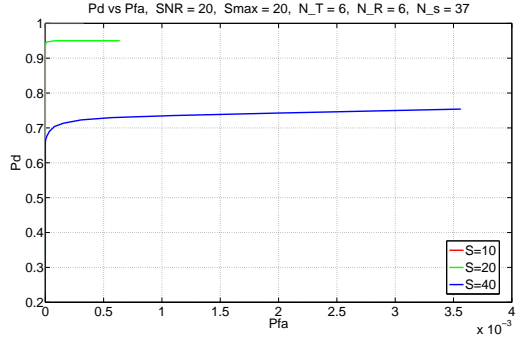} 
 
\end{tabular}

  \caption{MIMO, Kerdock codes, SNR=20} \label{SNR=20}
\end{figure}

We carry out two sets of simulations with different parameters to show the performance of the algorithms.
\begin{enumerate}
\item The first set of simulations is done using Kerdock codes as
transmission waveforms. The following parameters are used: $N_T = 6, N_R =
6, N_s = 37, N_f=37$ (hence we are using Kerdock waveforms of length 37); $S_{\text{max}}=20$ and the actual number of targets
is $S=S_{\text{max}}/2, S_{\text{max}}, 2S_{\text{max}}$, while the SNR is
chosen to be 15dB, 20dB and 25dB in Figure \ref{SNR=15}, Figure \ref{SNR=20} and Figure \ref{SNR=25} respectively. This set of simulations is aimed to
demonstrate the efficiency of Kerdock codes for MIMO radar at different SNR 
levels. 

\item The second set of simulations is to compare the Kerdock codes and the
finite harmonic oscillator system for both SIMO and MIMO. We fix the following
parameters in this set of simulations:  $S_{\text{max}}=10$ and
$S=S_{\text{max}}/2, S_{\text{max}}, 2S_{\text{max}}$, while the SNR is
fixed to be 15dB. We choose  $N_T = 1, N_R = 8, N_s = 11, N_f=11$ for SIMO and  $N_T = 2, N_R = 8, N_s = 17, N_f=17$ for MIMO. As we mentioned before, Figure \ref{fos1} shows that Kerdock codes are not very good choice in SIMO. But Kerdock codes are already very efficient when $N_T=2$ as shown in Figure \ref{fos2}, which also shows that the condition $\log^2N_\tau N_fN_\beta\le N_T$ in \eqref{assumptiondoppler2} is a restrictive theoretical condition and we can do much better in practice.

\end{enumerate}

\begin{figure}[h!]
\centering
\begin{tabular}{c}
  \includegraphics[width=80mm]{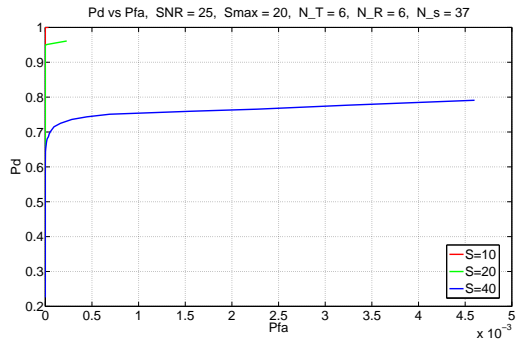} 
 
\end{tabular}

  \caption{MIMO, Kerdock codes, SNR=25} \label{SNR=25}
\end{figure}

\begin{figure}[h!]
\centering
\begin{tabular}{c}
  \includegraphics[width=80mm]{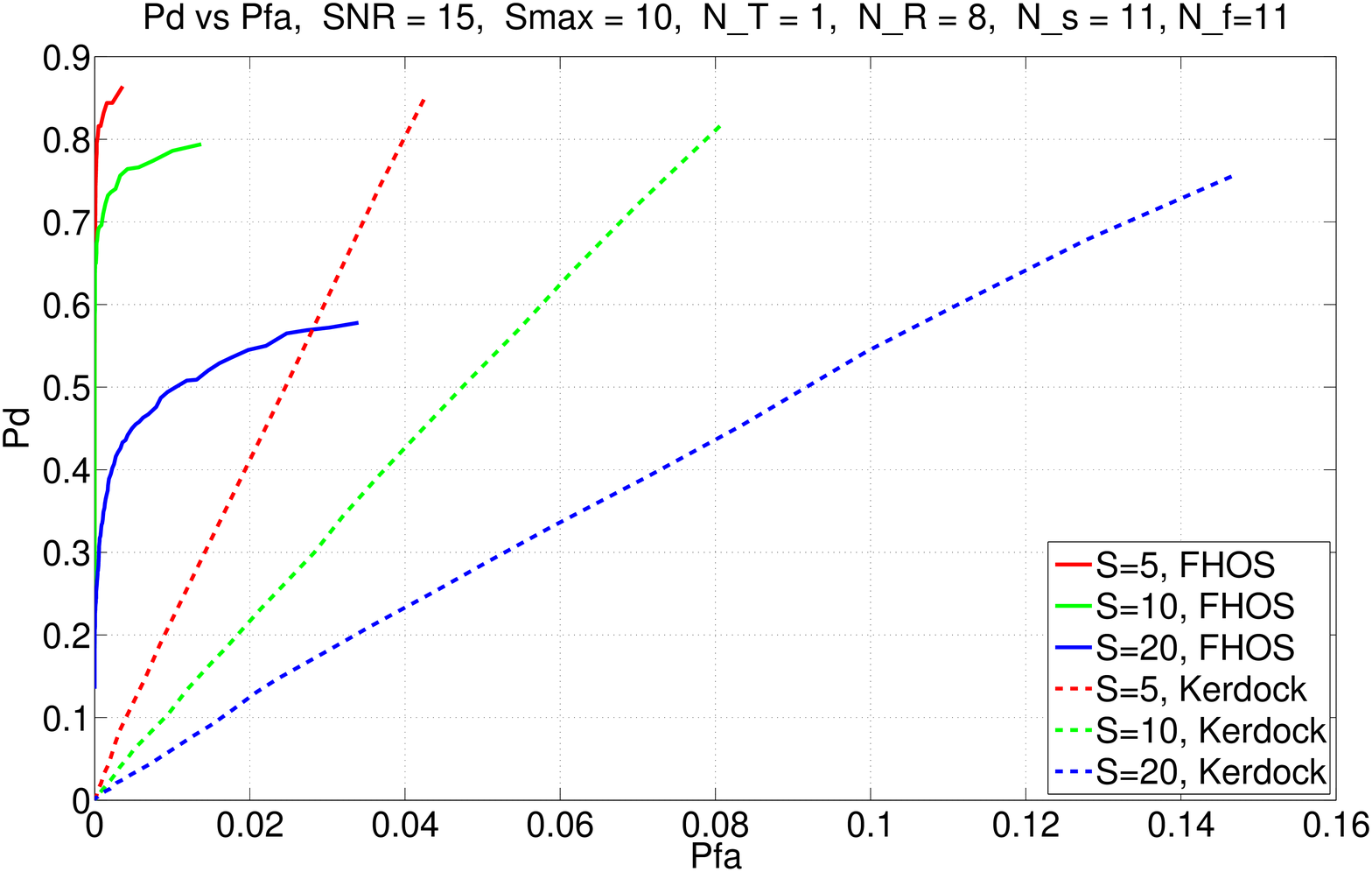} 
 
\end{tabular}
  \caption{SIMO, Kerdock codes vs FHOS, SNR=15} \label{fos1}
\end{figure}

\begin{figure}[h!]
\centering
\begin{tabular}{c}
  \includegraphics[width=80mm]{} 
 \end{tabular}

 \caption{MIMO, Kerdock codes vs FHOS, SNR=15} \label{fos2}
\end{figure}


\end{document}